\documentclass[a4paper,12pt,reqno]{amsart}
\usepackage{amscd,amssymb,graphicx,color,epigraph,amsmath}
\usepackage{tree-dvips}
\usepackage{qtree}
\usepackage[shortalphabetic,initials]{amsrefs}
\usepackage{hyperref}

\usepackage[dvipsnames]{xcolor}
\usepackage[T1]{fontenc}
\usepackage{hyperref}

\usepackage{pdflscape}

\usepackage{array}
\usepackage{tabularx}
\usepackage{cellspace}
\newcolumntype{C}[1]{>{\centering\arraybackslash}S{p{#1}}}
\hypersetup{
colorlinks = True,
linkcolor=Bittersweet,
citecolor=red,
}

\usepackage{enumitem}
\setlist[itemize]{leftmargin=18pt}
\setlist[enumerate]{leftmargin=18pt}
\usepackage{comment}

\usepackage{eufrak}
\usepackage{mathrsfs}
\usepackage{caption}
\usepackage{xparse}
\usepackage{amscd}
\usepackage[all,cmtip]{xy}
\usepackage[utf8]{inputenc}
\usepackage[T1]{fontenc}
\usepackage{graphicx}
\usepackage{xcolor}
\usepackage{array}
\usepackage{tikz}
\usetikzlibrary{decorations.pathreplacing, calligraphy}
\usepackage{multirow}
\usepackage{makecell}
\usepackage{float} 

\definecolor{customred}{rgb}{0.82,0.01,0.11}

\usepackage[margin=2.5cm]{geometry}

\usepackage{placeins}
\usepackage{afterpage}
\usepackage{amscd}
\usepackage{float}
\usepackage{graphics}
\usepackage{tikz}
\usetikzlibrary{graphs}
\usetikzlibrary{decorations.pathreplacing,angles,quotes}
\usetikzlibrary{shapes.geometric}

\usepackage{tikz-cd}
\usepackage{comment}
\usepackage{xspace}
\usepackage{mathtools}
\usepackage{pifont} %
\usepackage{amssymb}
\usepackage{amsthm}
\usepackage{graphicx}
\usepackage{subfig}
\usepackage{enumerate}
\usepackage{hyperref}

\usetikzlibrary{patterns,decorations.pathreplacing}
\usepackage{marginnote}

\usepackage{cancel}

\theoremstyle{plain}

\newtheorem{theorem}{Theorem}[section]

\newtheorem{lemma}[theorem]{Lemma}
\newtheorem{corollary}[theorem]{Corollary}

\theoremstyle{definition}

\newcommand{\appsection}[1]{\let\oldthesection\thesection
\renewcommand{\thesection}{Appendix \oldthesection}
\section{#1}\let\thesection\oldthesection}

\newtheorem{definition}[theorem]{Definition}

\theoremstyle{remark}

\newtheorem{remark}[theorem]{Remark}

\def\D{{\mathbb{D}}}

\def\Z{{\mathbb{Z}}}
\def\F{{\mathbb{F}}}
\def\Q{{\mathbb{Q}}}
\def\C{{\mathbb{C}}}
\def\P{{\mathbb{P}}}

\def\O{{\mathcal{O}}}

\def\Y{{\mathcal{Y}}}

\def\W{{\mathcal{W}}}

\usepackage{xcolor}

\newcommand{\QHD}{$\mathbb{Q}$\text{HD}\xspace}

\title{Degenerations of the complex projective plane with only rational singularities}

\author{Marcos Canedo}
\address{Facultad de Matem\'aticas,
Pontificia Universidad Cat\'olica de Chile, Santiago, Chile.}
\email{mgcanedo@uc.cl}

\author{Giancarlo Urz\'ua}
\address{Facultad de Matem\'aticas,
Pontificia \allowbreak Universidad \allowbreak{Cat\'olica} de Chile, Santiago, Chile.}
\email{gianurzua@gmail.com}

\begin{document}

\maketitle

\begin{abstract}
Wahl's conjecture states that two-dimensional singularities admitting a rational homology disk smoothing are weighted homogeneous. Assuming the conjecture, we classify all normal degenerations of the complex projective plane with only rational singularities. They are precisely the  surfaces classified by Manetti and Hacking--Prokhorov, which are controlled by the Markov equation, together with six new degenerations containing four non-log canonical singularities.
\end{abstract}

\tableofcontents

\section{Introduction} \label{s0}

In \cite{Bad86}, B\u adescu studied normal degenerations of rational surfaces with $K^2 \geq 0$. More precisely, he considered proper deformations $$(W \subset \mathcal{W}) \to (0 \in \D)$$ over a disk $\D$, where $W_t$ is a nonsingular rational surface with $K_{W_t}^2 \geq 0$ for $t \neq 0$, and $W:=W_0$ is a normal surface. He proved that either (A) $W$ is a rational surface with at most rational singularities, or (B) $W$ has precisely one non-rational singularity and satisfies some particular properties (see \cite[Thm.~1]{Bad86} for details). When $W_t=\P^2$, he showed that if $K_W$ is Cartier, then $W$ is either $\P^2$ or the elliptic cone over a nonsingular plane cubic \cite[(4.3)~Thm.]{Bad86} (which is the only strictly log canonical degeneration of $\P_{\C}^2$ by \cite[Thm.~8.5]{Hack04}). B\u adescu also pointed out the classical construction of ``sweeping out the cone'' over nonsingular plane curves as a source of further examples, and asked whether these are all the normal degenerations of $\P^2$ \cite[(4.8.2)]{Bad86}. This was not the case. In \cite{Man91}, Manetti found infinitely many degenerations of $\P^2$ of type (A). He proved in \cite[Thm.~11]{Man91} that for type (A) such degenerations $W$ must have either (A1) only quotient singularities or (A2) at most one non-quotient singularity. For (A1), he proved that the singularities of $W$ are Wahl singularities, at most three, and that the degeneration is $\Q$-Gorenstein \cite[Main Thm.]{Man91}(see also \cite[III and IV]{ManTh}). Finally, Hacking and Prokhorov in \cite{HP10} proved that any degeneration of type (A1) is a $\Q$-Gorenstein partial smoothing of a $\P(a^2,b^2,c^2)$, where $(a,b,c)$ satisfies the Markov equation $$a^2+b^2+c^2=3abc.$$ 

A \QHD singularity is a two-dimensional normal singularity that admits a rational homology disk smoothing, that is, a smoothing with Milnor number equal to $0$. A well-known conjecture of J. Wahl \cite{Wahl_2011,Wahl_21} states that every \QHD singularity is weighted homogeneous \cite{PSS14,Be25}. By \cite{SSW_2008} and \cite{BS_2011}, there is an explicit classification of weighted homogeneous \QHD singularities. We refer to their classification by the star dual graphs of their minimal resolutions, as shown in Figures \ref{fig QHD V3} and \ref{fig QHD V4}. Under the assumption that Wahl's conjecture holds, in this paper we classify all degenerations of type (A2), and so we finish the classification of normal degenerations of $\P_{\C}^2$ with only rational singularities.

\begin{theorem}
Assume that every \QHD singularity is weighted homogeneous. Then every normal degeneration of $\P_{\C}^2$ with only rational singularities is either a partial $\Q$-Gorenstein smoothing of $\P(a^2,b^2,c^2)$, where $a^2+b^2+c^2=3abc$, or one of the six surfaces $W_j$, $W_f$, $W_c$, $W_{c5}$, $W_b$, and $W_{b13}$ shown in Figure \ref{mainfig}.
\label{main}
\end{theorem}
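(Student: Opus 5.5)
By Manetti's theorem \cite[Thm.~11]{Man91}, a degeneration $W$ of $\P^2$ with only rational singularities either has only quotient singularities, or has exactly one non-quotient singularity. In the quotient case, Manetti and Hacking--Prokhorov \cite{HP10} already give the answer: partial $\Q$-Gorenstein smoothings of $\P(a^2,b^2,c^2)$ with $(a,b,c)$ Markov. So the plan is to treat case (A2): $W$ has one rational non-quotient singularity $P$, together with possibly some quotient singularities $Q_1,\dots,Q_r$.

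\textbf{Step 1: every singularity is a \QHD singularity.} Since $b_2(\P^2)=1$ and $W$ has rational singularities, $b_2(W)\ge 1$ (rational singularities make $W$ $\Q$-factorial, and an ample class exists). The Milnor fibers glue into the nearby fiber $\P^2$. A topological count via Mayer--Vietoris, using that the link of each rational singularity is a $\Q$-homology sphere, should give $b_2(\P^2)=b_2(W)+\sum\mu_i$. This forces each local smoothing to have Milnor number $0$. By Wahl's conjecture, $P$ is then weighted homogeneous. Being non-quotient, $P$ lies in the explicit families of \cite{SSW_2008,BS_2011} (Figures~\ref{fig QHD V3} and~\ref{fig QHD V4}), with star-shaped graph of valency $3$ or $4$. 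The $Q_i$ are Wahl singularities, by the same local argument.

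\textbf{Step 2: pass to the minimal resolution and bound the configuration.} Let $\tilde W\to W$ be the minimal resolution. Since $b_2(W)=1$ and $\tilde W$ is rational, $\rho(\tilde W)=1+\#\{\text{exceptional curves}\}$, and $K_{\tilde W}^2=10-\rho(\tilde W)$. Each \QHD singularity has a known $K^2$ contribution, with $K^2$ of the Milnor fiber equal to $0$ (Wahl-type). This yields $K_W^2=9$ with $K_W$ anti-ample, since $-K_W$ is $\Q$-Cartier and it deforms, so $W$ is a log del Pezzo-like surface. Next, use the Bogomolov--Miyaoka--Yau style inequality and Keel--McKernan type bounds, or the orbifold Euler characteristic $e_{orb}(W)\ge 0$ restricted to the log canonical part, to show there are at most two quotient singularities besides $P$. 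Then choose a $(-1)$-curve $E$ on $\tilde W$ and study how it meets the resolution graphs. Since $-K_W$ is ample, $E$ meets the exceptional locus with small total discrepancy. Contracting $E$ and subsequent $(-1)$-curves must lead to $\P^2$ or $\F_n$. This converts the problem into a combinatorial one: chains and star graphs whose blow-downs yield a minimal rational surface with $\rho$ consistent. I would run the standard "MMP on the pair $(\tilde W, \text{exceptional divisor})$" argument, using that $E$ can meet the central curve or the arms only in restricted ways.

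\textbf{Step 3: classify and realize.} For each family in Figures~\ref{fig QHD V3}--\ref{fig QHD V4}, carry out the blow-down analysis to find the finitely many compatible configurations. Show that they give exactly $W_j, W_f, W_c, W_{c5}, W_b, W_{b13}$. Then check that each actually smooths to $\P^2$. This requires $H^2(T_W)=0$ (local-to-global unobstructedness, via the vanishing $H^2(T_{\tilde W}(-\log E))=0$ for rational surfaces with $-K$ big). A local \QHD smoothing then globalizes, and the general fiber is a rational surface with $K^2=9$, hence $\P^2$.

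The main obstacle is Step 2: turning anti-ampleness and the star-shaped graph constraints into a finite list. The \QHD families are infinite, so one must exclude large members. I would attempt this by computing discrepancies of the central and arm curves and showing that $-K_{\tilde W}\cdot E>0$ fails for long arms.
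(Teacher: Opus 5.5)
Your Step 1 is correct, and it is a cleaner route than the paper's Lemma~\ref{mu=0}. Links of rational singularities are $\Q$-homology spheres, so Mayer--Vietoris gives $b_2(W_t)=b_2(W)+\sum_i\mu_i$. Projectivity of $W$ gives $b_2(W)\ge 1$. Hence all $\mu_i=0$ and $b_2(W)=1$, without Noether's formula, Greuel--Steenbrink, or B\u adescu's $\rho(W)=1$. Your realization step also matches the paper in outline, with two caveats. Passing from $H^2(T_{\tilde W}(-\log C))=0$ to $H^2(T_W)=0$ at the non-quotient point is not automatic; it is what \cite[Thm.~5.1]{CU26} supplies in valency $3$. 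Also, $K_{W_t}^2=9$ requires Wahl's theorem that \QHD smoothings are $\Q$-Gorenstein.

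The genuine gap is Steps 2--3, which carry the whole classification. You give no mechanism that cuts the infinite \QHD families down to six surfaces, as you acknowledge, and the tools you list do not supply one.
\begin{itemize}
\item $K_W^2=9$ is vacuous here: with $\mu_i=0$ and $\rho(W)=1$ one gets $K_W^2=K_{\tilde W}^2+|C|=10-\rho(\tilde W)+|C|=9$ identically.
\item BMY and Keel--McKernan type bounds are formulated for log canonical (respectively klt) surfaces, and $P$ is not log canonical. Moreover, bounding the number of quotient points says nothing about the parameters of $P$.
\item $-K_{\tilde W}\cdot E=1$ for every $(-1)$-curve. The meaningful condition $-K_W\cdot\phi(E)>0$, i.e.\ $\sum(-a_i)(E\cdot C_i)<1$, is local and has no evident finiteness consequence.
\end{itemize}

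What forces finiteness in the paper is global and combinatorial:
\begin{itemize}
\item[(i)] Manetti's structure theorem \cite[Thm.~11]{Man91}. It lets one choose $X\to\F_d$ with $d$ maximal so that $C$ consists of $\sigma_\infty$ plus fiber components, $\sigma_\infty\subset C_1$, and nothing is blown up on $\sigma_\infty$. You cite this theorem only for the bound on non-quotient points.
\item[(ii)] $\rho(W)=1$, which forces exactly one $(-1)$-curve in each fiber meeting $C$.
\item[(iii)] The $K^2$ bookkeeping relative to $\F_d$ \cite[Prop.~3.5]{CU26}, giving $d=4+2\#f+s_1-t_1-v_4$. Together with $\sigma_\infty$ ending a leg, this yields $v_4=0$, $\#f=1$, and $d=6+s_1-t_1\ge 8$.
\item[(iv)] The central curve is the proper transform of the $(-1)$-curve $G$ ending the first maximal chain. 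So two legs are $[d,a_s,\dots,a_1]$ and $[b_1,\dots,b_t]$, with $[a_s,\dots,a_1,1,b_1,\dots,b_t]=0$.
\end{itemize}
Since $\sigma_\infty$, with self-intersection $-d\le -8$, is the end of a leg, the parameters of each family are tied to $d$. The blow-down analysis of the contractible leg (Lemmas~\ref{lemma1} and \ref{possiblelegs}) then pins them down, e.g.\ $p=2$, $q=r=4$ in type (b). Without (i)--(iii) or a genuine substitute, your plan yields neither finiteness nor the specific list.
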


\begin{figure}[ht]
    \centering
    \includegraphics[scale=0.97]{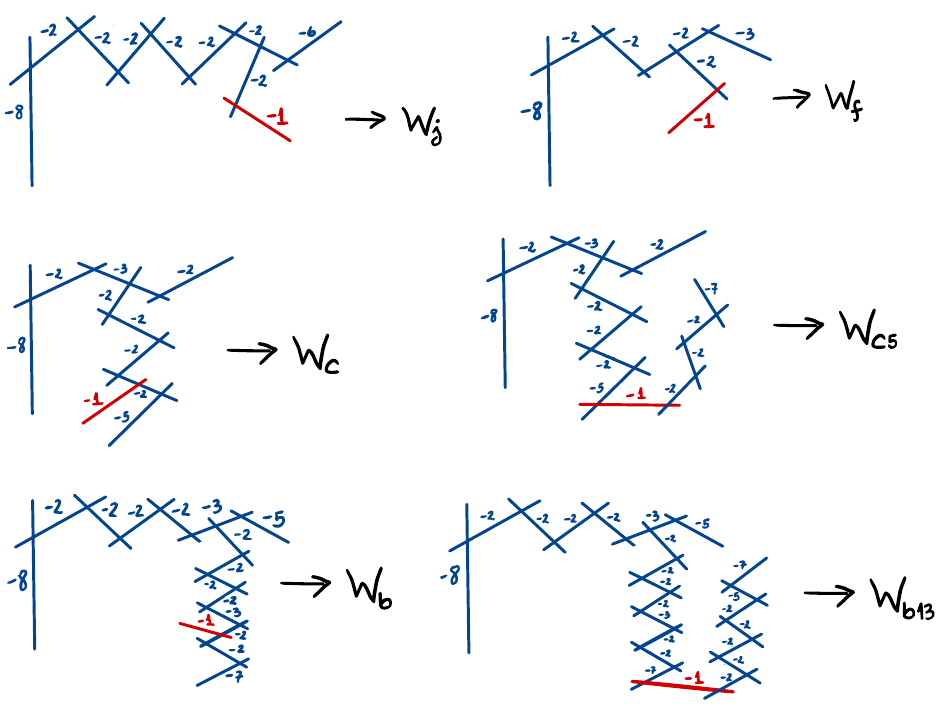}
    \caption{The minimal resolutions of the six non-log canonical rational degenerations of $\P_{\C}^2$. All of them blow down to the Hirzebruch surface $\F_8$.}
    \label{mainfig}
\end{figure}

Thus, there are four new singularities arising in degenerations of $\P_{\C}^2$. They are the \QHD singularities of type (b) with $p=2$, $q=r=4$ ($\mathcal{N}(3,4,4)$); type (c) with $r=4$, $q=1$ ($\mathcal{N}(0,1,4)$); type (f) with $q=2$ ($\mathcal{M}(0,2,0)$); and type (j) with $q=4$ ($\mathcal{C}_3^3(4)$) \footnote{In parentheses is the original notation from \cite{SSW_2008,Wahl_21}.}. Their indices are $58$, $9$, $8$, and $16$, respectively. In particular, there are normal degenerations of $\P_{\C}^2$ whose indices are not Markov numbers (but they remain consistent with the conjectures in \cite{DeVS24}). The classification also shows that degenerations of type (A2) have at most $3$ singularities, since the new ones have only $1$ or $2$ singularities (see \cite[Cor.~11]{Man93}). In fact, four of them have a single singularity, while the remaining two are obtained by applying slidings \cite[Def.~3.17]{CU26} to two of the former, producing two common degenerations with the Manetti and Hacking--Prokhorov family at the singularities $\frac{1}{5^2}(1,4)$ and $\frac{1}{13^2}(1,25)$.

Consider one of the new degenerations $(W \subset \mathcal{W}) \to (0 \in \D)$. Then its Seifert blow-up \cite{Wahl_2013} defines an slc degeneration of $\P_{\C}^2$ formed by two surfaces, $W'$ (the proper transform of $W$) and $Z$ (the compactified Milnor fiber), glued along a $\P_{\C}^1$ with three orbifold double normal crossing singularities. Its canonical class is not ample (compare with \cite[Lem.~9.5]{Hack04}), and it defines an extremal flipping neighborhood $(\Gamma^- \subset \W) \to (q \in \Y)$.

\begin{theorem}
Let $(W^+ \cup Z^+ \subset \W^+) \to (0 \in \D)$ be the degeneration obtained by flipping $(\Gamma^- \subset \W) \to (q \in \Y)$. Then there is a divisorial contraction
$(W^+ \cup Z^+ \subset \W^+) \to (\bar{Z} \subset \bar{\W})$
contracting $W^+$ such that the induced degeneration $(\bar{Z} \subset \bar{\W}) \to (0 \in \D)$ is $\Q$-Gorenstein and satisfies:
\begin{itemize}
    \item[(1)] if $W$ has one singularity, then $\bar{Z}=\P_{\C}^2$;
    
    \item[(2)] if $W=W_{c5}$, then $\bar{Z}$ has only the singularity $\frac{1}{5^2}(1,4)$;
    
    \item[(3)] if $W=W_{b13}$, then $\bar{Z}$ has only the singularity $\frac{1}{13^2}(1,25)$.
\end{itemize}
    \label{flip}
\end{theorem}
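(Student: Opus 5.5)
We plan to work explicitly with the six surfaces of Figure \ref{mainfig}, using the fact (stated in the caption) that the minimal resolution of each blows down to $\F_8$. The first step is to describe the Seifert blow-up of $(W\subset\W)\to(0\in\D)$ concretely. Take the weighted blow-up at the non-log canonical \QHD point $P\in W$, weighted by the Seifert $\C^*$-action. Its exceptional curve is the central curve $C\cong\P^1$ of the star-shaped resolution graph. The proper transform $W'$ is obtained by contracting the three arms, which produces three cyclic quotient points on $C$. The compactified Milnor fiber $Z$ is the other component of the central fiber. Using the known compactifications of \QHD Milnor fibers from \cite{SSW_2008,BS_2011} (these are rational surfaces with $C$ as a boundary curve), we record $C^2$ in $W'$ and in $Z$, together with the local indices at the three orbifold double normal crossing points.

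We then compute $K_{\W}\cdot C$. This is a short computation by adjunction on $W'$: it gives $(K_{W'}+C)\cdot C$, and this number turns out to be negative. So $C=\Gamma^-$ is $K$-negative. Since $\rho(\W/\D)$ relative to the special fiber is small, $C$ spans an extremal ray, and its contraction $(\Gamma^-\subset\W)\to(q\in\Y)$ is small, i.e.\ a flipping contraction.

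The flip will be constructed by the HTU/Urz\'ua machinery for extremal neighborhoods of type k1A/k2A. Restricted to a general hyperplane section through $\Gamma^-$, each of the three cyclic quotient points is a $T$-singularity of the threefold total space (a $\Q$-Gorenstein smoothing direction). We therefore reduce to the "Mori train"/continued fraction algorithm, which identifies $\Gamma^+$ and the new central fiber $W^+\cup Z^+$. Here $W^+$ is obtained from $W'$ by contracting $C$ and extracting the flipped curve, and $Z^+$ is obtained from $Z$ by blowing up along $\Gamma^+$ (or the reverse). Next we verify that $W^+$ is a $K$-negative surface that can be contracted. Its minimal resolution is read off from the $\F_8$ model, and $W^+$ is a rational surface that contracts, via a divisorial contraction of the threefold, onto a curve or point of $Z^+$. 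We check that $K_{\W^+}$ is negative on the ruling class of $W^+$, so the contraction exists by the relative MMP and is divisorial. The outcome is $\bar Z$, the image of $Z^+$, and the degeneration $\bar{\W}\to\D$ is $\Q$-Gorenstein because both the flip and the divisorial contraction preserve $\Q$-Cartierness of $K$.

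Finally we identify $\bar Z$. Its general fiber is still $\P^2$. Since $\bar Z$ is obtained from the compactified Milnor fiber, which is a rational homology disk complement, it has only quotient singularities coming from the surviving arm points. By \cite{HP10}, $\bar Z$ must then be a partial $\Q$-Gorenstein smoothing of some $\P(a^2,b^2,c^2)$. A count of singular points, together with the index computation from the train, gives $\bar Z=\P^2$ when $W$ has one singularity. When $W=W_{c5}$ or $W=W_{b13}$, the Wahl singularity used in the sliding survives, giving $\frac{1}{5^2}(1,4)$ and $\frac{1}{13^2}(1,25)$ respectively. The main obstacle is the flip computation itself. It requires identifying the type of the extremal neighborhood and running the continued-fraction algorithm correctly for each of the six cases, and in particular confirming that $\Gamma^+$ lies so that $W^+$ becomes contractible. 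I would first try the k2A algorithm case by case, using the explicit arm data.
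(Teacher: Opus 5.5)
\textbf{There is a genuine gap at your first substantive step, and the rest of the plan inherits it.} You take the flipping curve to be the central curve $C$, i.e.\ the double curve $\Delta=W'\cap Z$ of the Seifert blow-up, and claim $K_{\W'}\cdot C<0$. This is false.

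Since $W'+Z$ is a fiber, $K_{\W'}|_{W'}=K_{W'}+\Delta$. Adjunction with the three orbifold points of orders $m_1,m_2,m_3$ on $\Delta$ gives
$(K_{W'}+\Delta)\cdot\Delta=-2+\sum_{i=1}^3(1-\frac{1}{m_i})=1-\sum_{i=1}^3\frac{1}{m_i}$.
This is \emph{positive} precisely because the singularity is not log canonical. For $W_j$, for instance, the legs $[2]$, $[6]$, $[8,2,2,2,2,2]$ have orders $2,6,43$. In other words, $(W',\Delta)$ is the log canonical modification, so $\Delta$ is $K_{\W'}$-positive and cannot be flipped.

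In the paper, $\Gamma^-$ is the proper transform in $W'$ of the image of the unique $(-1)$-curve in the special fiber of $\pi'$. It meets $\Delta$ at only one leg point, the one coming from the contractible leg. For $W_{c5}$ and $W_{b13}$ it also passes through the extra Wahl point. This is why the neighborhood is k1A or k2A. One checks $K_{\W'}\cdot\Gamma^-=(K_{W'}+\Delta)\cdot\Gamma^-<0$ (e.g.\ $-\frac12$ for $W_j$), and that contracting $\Gamma^-$ in $W'$ gives a smooth point. Your instinct to run the k2A algorithm is what the paper does, using that k1A and k2A neighborhoods are deformation equivalent, but it must be run on this $\Gamma^-$.

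\textbf{Consequences downstream.}
\begin{enumerate}
\item \emph{The flipped surfaces.} The flip does not contract $\Delta$ in $W'$. Instead $W^+\cong W'/\Gamma^-$, and the flipped curve $\Gamma^+$ lies in $Z^+$, not in $W^+$, over the point where $\Gamma^-$ met $\Delta$.
\item \emph{The divisorial contraction.} Your plan misses the actual mechanism. After the flip, $\Delta^+$ carries only two orbifold points of $W^+$, of orders $a$ and $b$. Hence $\Delta^+\cdot K_{\W^+}=\Delta^+\cdot(K_{W^+}+\Delta^+)=-\frac1a-\frac1b<0$. Since $\rho(W^+)=1$ and $(\Delta^+)^2>0$ on $W^+$, all of $W^+$ is contracted to a point, as in \cite[Lem.~9.5]{Hack04}. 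There is no ruling on $W^+$ to test.
\item \emph{The identification of $\bar Z$.} The two remaining leg points on $\Delta^+\subset Z^+$ do not survive. On the minimal resolution of $Z^+$ they form, together with the $(-1)$-curve $\Delta^+$, chains such as $[2,2,2,2,2,2,7,1,2,2,2,2,2]$, which blow down to a smooth point. Since $\rho(Z^+)=2$, this gives $\rho(\bar Z)=1$.
\item \emph{The singularities of $\bar Z$.} They are exactly the points of $Z^+$ on $\Gamma^+\setminus\Delta^+$ produced by the flip, namely the Wahl chains $[2,2,2,7]$ and $[7,5,2,2,2,2,2]$ in the sliding cases. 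In particular, the Wahl point of $W_{c5}$ does not ``survive'': it lies on $\Gamma^-$ and is absorbed into the smooth point $W'/\Gamma^-$ of $W^+$.
\end{enumerate}
Appealing to Hacking--Prokhorov plus a count of singular points cannot replace this explicit computation.
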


We also construct a normal projective surface $W$ with $-K_W$ ample, $K_W^2=9$, $b_2(W)=1$, and one of the rational singularities studied by Beke, Plamenevskaya, and Starkston in \cite{BPS26}. This singularity admits a rational homology disk Stein filling \cite[Thm.~1.1]{BPS26} but no \QHD smoothing. It symplectically rationally blows down to $\P_{\C}^2$ (Theorem \ref{SRBDofBPS}).

\subsubsection*{Acknowledgments} The first-named author was funded by the ANID doctoral scholarship 21220497, and the second-named author by FONDECYT regular grant 1230065. \cite{progrCanedo}

\section{Preliminaries} \label{s1}

We will be using notation and some facts from Sections 2 and 3 of \cite{CU26}. Let $(W \subset \mathcal{W}) \to (0 \in \D)$ be a normal degeneration of $\P_{\C}^2$ of type (A2) with a rational non-quotient singularity. Let $l$ be the number of singularities of $W$. Consider the diagram
$$\xymatrix{  & X  \ar[ld]_{\pi} \ar[rd]^{\phi} &  \\ S &  & W}$$
where the morphism $\phi$ is the minimal resolution of $W$, and $\pi$ is a composition of $m$ blow-ups such that $S$ has no $(-1)$-curves. Let $E_i$ be the pull-back divisor in $X$ of the $i$-th point blown-up through $\pi$. Thus, $E_i$ is a connected, possibly non-reduced tree of $\P_{\C}^1$s, $E_i^2=-1$, and $E_i\cdot E_j=0$ for $i\neq j$. Let $E=\sum_{i=1}^{m}{E_i}$. We have $K_X \sim \pi^{*}(K_S)+E$. Let $C=\sum_{i=1}^l C_i$ be the exceptional (reduced) divisor of $\phi$, where the $C_i$ are the exceptional divisors over each singularity. Let $|C_i|$ be the number of curves in $C_i$. 

By \cite[Thm.~1]{Bad86}, we know that $W$ is rational. As in \cite{Man91,ManTh}, let us choose $\pi \colon X \to S=\F_d$ with maximal $d \geq 2$. We recall that the $\P_{\C}^1$-bundle $\F_d \to \P_{\C}^1$ has a unique negative curve $\sigma_{\infty}$ with $\sigma_{\infty}^2=-d$. Let $F$ be the class of a fiber. We have an induced fibration $\pi' \colon X \to \P_{\C}^1$ via composition of $\pi$ with $\F_d \to \P_{\C}^1$. Due to our maximal choice, there are no blow-ups over $\sigma_{\infty}$.   

\begin{remark}
We have the following known facts for $W$.

\begin{itemize}
\item[(R1)] By \cite[Prop.~3]{Man91}, we have that  $(W \subset \mathcal{W}) \to (0 \in \D)$ is projective, $q(W)=h^1(\O_W)=0$ and $p_g(W)=h^2(\O_W)=0$.

\item[(R2)] By \cite[Thm.~11]{Man91}, we have at most one non-quotient singularity in $W$, and $C$ consists of $\sigma_{\infty} \subset X$ plus some irreducible components of fibers of $\pi'$. Let $C_1$ be the component over the non-quotient singularity. Then $\sigma_{\infty} \subset C_1$.  

\item[(R3)] By \cite[Cor.~5]{Man91}, the canonical class $K_{\mathcal{W}}$ is $\Q$-Cartier, and $K_W^2=9$. In particular, any quotient singularity of $W$ must be Wahl and $(W \subset \mathcal{W}) \to (0 \in \D)$ is a $\Q$-Gorenstein smoothing.

\item[(R4)] By \cite[Thm.~1]{Bad86}, we have that $\rho(W)=1$. Hence, if $F_i$ is a fiber of $\pi'$ with components in $C$, then there is exactly one $(-1)$-curve in $F_i$ and any other curve in $F_i$ belongs to $C$. Let $\# f$ be the number of such fibers. We have that $-K_W$ is ample.

\item[(R5)] The tangent sheaf of $W$ is $T_W:=\mathcal{H}\text{om}_{\O_W}(\Omega_W^1,\O_W)$. By \cite[Lem.~11]{Man93}, we have $H^2(W,T_W)=0$, and so there are no local-to-global obstructions to deform $W$. 

\end{itemize}
\label{initialfacts}
\end{remark}

\begin{definition}
For any given $E_i$, we define the sets of divisors 
$$C_{out,i}=\{ \Gamma \in C \colon \Gamma \nsubseteq E_i \} \ \ \ \text{and} \ \ \ C_{in,i}=\{ \Gamma \in C \colon \Gamma \subseteq E_i \}.$$ Let $S_h$ be the set of $E_i$s such that $E_i \cdot C_{out,i}=h$. Let $T_h$ be the set of $E_i$s in $S_h$ that satisfies $E_i \cdot C_{in,i}=0$. Let $s_h=|S_h|$ and $t_h=|T_h|$ be the corresponding cardinalities.
\label{def ShTh}
\end{definition}

For any divisor $D$ on a nonsingular projective surface $Y$, we define $$q(D):=-\frac{1}{2}(D^2+ D\cdot K_Y).$$ If $D$ is effective, then $q(D)=\chi(\O_D)$. 

\begin{lemma}
We have $s_0=0$, $s_i=0$ for $i\geq 3$, $\# f = t_1+t_2$, and $q(E+C)=1+s_1-t_1$.
\label{lemmaq}
\end{lemma}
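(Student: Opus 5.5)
The plan is to get everything from the fibration $\pi'\colon X\to\P^1_{\C}$, together with the standard formula
$$q(A+B)=q(A)+q(B)-A\cdot B .$$

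\emph{Setting up $E_i\cdot\Gamma$.} Write $e_i$ for the exceptional curve of the $i$-th blow-up and $e_i'$ for its strict transform in $X$. Then $E_i$ is the total transform of $e_i$, and for a component $\Gamma$ of $E_i$ we have $E_i\cdot\Gamma=0$ unless $\Gamma=e_i'$, in which case $E_i\cdot e_i'=-1$. Hence $E_i\cdot C_{in,i}=-1$ if $e_i'\subset C$, and $E_i\cdot C_{in,i}=0$ otherwise.

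Since there are no blow-ups over $\sigma_\infty$, every $e_i'$ is a fibre component. By (R4), a fibre containing blown-up points has exactly one component not in $C$, namely its unique $(-1)$-curve. So $E_i\in T_h$ exactly when $e_i'$ is that $(-1)$-curve. Each fibre of $\pi'$ with components in $C$ has been blown up, so it contains exactly one such $e_i'$. This gives $\#f=\sum_h t_h$.

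\emph{Bounding $h$.} The curves of $C_{out,i}$ meeting $E_i$ are strict transforms of curves through the $i$-th blown-up point $p_i$. They cannot be $\sigma_\infty$, so they are fibre components through $p_i$ at the moment it is blown up. In a fibre that is a normal crossing tree of smooth rational curves, at most two components pass through $p_i$, each with multiplicity one, so $E_i\cdot C_{out,i}\le 2$. This gives $s_h=0$ for $h\ge3$.

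For $s_0=0$, suppose $E_i\cdot C_{out,i}=0$. Then every fibre component through $p_i$ lies outside $C$, and in particular $p_i$ is not a point of the original smooth fibre of $\F_d$. So $p_i$ lies only on the last $(-1)$-curve of the fibre at that stage. Blowing it up makes that curve a $(-2)$-curve not in $C$ in the final fibre, which contradicts (R4). Here I must use that components other than the final $(-1)$-curve eventually lie in $C$. The case where $p_i$ lies on the original fibre $F$ also needs care: the strict transform of $F$ is either in $C$ or is the $(-1)$-curve, and the second alternative is ruled out similarly.

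\emph{Computing $q(E+C)$.} Since $E_i^2=-1$ and $K_X\cdot E_i=E\cdot E_i=-1$, we get $q(E_i)=1$. The $E_i$ are pairwise orthogonal, so $q(E)=m=s_1+s_2$. Each $C_j$ is the exceptional tree of a rational singularity, so $q(C_j)=\chi(\O_{C_j})=1$. The $C_j$ are disjoint, so $q(C)=l$. Moreover
$$E\cdot C=\sum_i\bigl(E_i\cdot C_{out,i}+E_i\cdot C_{in,i}\bigr)=s_1+2s_2-(s_1-t_1)-(s_2-t_2).$$
Substituting into the formula above gives
$$q(E+C)=l+s_1-t_2 .$$

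\emph{Main obstacle.} It remains to show $l=1+t_2$. I expect this global bookkeeping to be the crux. I would count singularities fibre by fibre. The non-quotient component $C_1$ contains $\sigma_\infty$, and hence every fibre-component of $C$ that is connected to $\sigma_\infty$. A quotient (Wahl) component $C_j$, $j\ge2$, is a chain lying in a single fibre and separated from $\sigma_\infty$.

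Consider a fibre whose $(-1)$-curve is $e_i'$. If $E_i\in T_1$, the $(-1)$-curve meets $C$ once, so the fibre minus it is connected to $\sigma_\infty$ and contributes nothing new. If $E_i\in T_2$, the $(-1)$-curve splits $C$ in that fibre into two pieces, and exactly one of them is disjoint from $\sigma_\infty$. That piece is one new singularity.

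If this local analysis does not close up cleanly, I would instead cross-check using $K_X^2=8-m$ and $K_W^2=9$ (R3). Writing $\phi^*K_W=K_X+\sum a_\Gamma\Gamma$ and using $\rho(W)=1$ with the Noether-type count $m+2-\sum_j|C_j|=1$, I would express $l$ in terms of the $t_h$.
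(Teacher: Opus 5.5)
Your argument is correct in substance but takes a different route from the paper, and there is an arithmetic slip to fix. With your own values $q(E)=s_1+s_2$, $q(C)=l$ and $E\cdot C=s_1+2s_2-(s_1-t_1)-(s_2-t_2)=s_2+t_1+t_2$, the expansion gives $q(E+C)=l+s_1-t_1-t_2$, not $l+s_1-t_2$. With the corrected formula, the remaining claim is exactly $l=1+t_2$, as you say.

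Your fibre-by-fibre count does prove it, but you need one fact that your $s_0=0$ step and your identity $\#f=t_1+t_2$ also use without saying so. The last exceptional curve $e_k'=E_k$ over a blown-up fibre is a $(-1)$-curve of $X$. By (R4) it is therefore the unique component outside $C$, so the strict transform $\tilde F$ of the original fibre and all earlier $e_j'$ lie in $C$. The count then closes as follows:
\begin{itemize}
\item $\sigma_\infty$ meets that fibre only along $\tilde F$.
\item The fibre is a tree in which $e_k'$ has exactly $E_k\cdot C_{out,k}\in\{1,2\}$ neighbours. Deleting $e_k'$ leaves the piece of $C$ containing $\tilde F$, attached to $\sigma_\infty$, plus one further connected component of $C$ precisely when $E_k\in T_2$.
\item By (R2), every component of $C$ other than $\sigma_\infty$ lies in one of these $\#f$ fibres, so $C$ has $1+t_2$ connected components.
\end{itemize}
No cross-check via $K_X^2$ is needed.

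The paper never counts singular points. It quotes \cite[Rem.~3.4]{CU26}, $q(E+C)=q(\pi(C))+s_1-t_1$, and notes $q(\pi(C))=1$ because $\pi(C)$ is $\sigma_\infty$ plus some fibres of $\F_d$, a tree of rational curves. One way to see that formula is to write $E+C=\pi^*\pi(C)+\sum_j c_jE_j$ with $c_j=1+\varepsilon_j-E_j\cdot C_{out,j}$, where $\varepsilon_j=1$ if $e_j'\subset C$ and $\varepsilon_j=0$ otherwise. Orthogonality then gives $q(E+C)=q(\pi(C))+\sum_j c_j(c_j+1)/2$. Since $E_j\cdot C_{out,j}\in\{1,2\}$, only the $E_j\in S_1\setminus T_1$ have $c_j=1$; the rest have $c_j\in\{0,-1\}$ and contribute nothing.

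That route works blow-up by blow-up and ignores how $C$ splits into connected components, so it applies to any configuration. Yours relies on the special shape of $C$ from (R2) and (R4), but it yields the extra relation $l=1+t_2$. Combined with $\#f=1$ from Corollary~\ref{dcorollary}, this shows directly that $W$ has at most two singular points.
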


\begin{proof}
For $E_k$ there are two possibilities: either it is the last $(-1)$-curve over a fiber or it is not. In the first case, $E_k \in T_1 \cup T_2$. In the second case, $E_k \in S_h$ for some $h=1,2$, by the projection formula and the fact that $\pi^*(\pi(C))_{\text{red}}$ is a simple normal crossings divisor. By \cite[Rem.~3.4]{CU26}, we have $q(E+C)=q(\pi(C))+ s_1-t_1$, and directly $q(\pi(C))=1$.
\end{proof}

\begin{lemma}
For each singularity of $W$, the Milnor number of the smoothing induced by $(W \subset \mathcal{W}) \to (0 \in \D)$ is equal to $0$.
\label{mu=0}
\end{lemma}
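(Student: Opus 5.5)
This is a topological Euler characteristic count, comparing the central fiber $W$ with the general fiber $W_t=\P^2_{\C}$ through the Milnor fibers of the local smoothings. Since $(W \subset \mathcal{W}) \to (0 \in \D)$ is a smoothing whose singular points are isolated, the general fiber is obtained from $W$ by removing small conic neighborhoods of the $l$ singular points and gluing in the Milnor fibers $M_i$ of the induced smoothings. Additivity of the Euler characteristic then gives
\[
e(\P^2_{\C}) = e(W) - l + \sum_{i=1}^{l} e(M_i) = e(W) + \sum_{i=1}^{l} \mu_i,
\]
where $\mu_i = e(M_i)-1$. Note that $b_1(M_i)=0$ for a smoothing of a rational singularity (Greuel--Steenbrink), so $\mu_i = b_2(M_i)\geq 0$. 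Hence $3 = e(W) + \sum_i \mu_i$.

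It therefore suffices to show that $e(W)=3$, and then every $\mu_i$ vanishes because each is nonnegative. I would compute $e(W)$ as follows. By (R4), $\rho(W)=1$. By (R1) together with rationality of $W$, we have $q(W)=0$, so $b_1(W)=0$. Because the singularities are rational, their links are rational homology spheres. Hence $W$ is a rational homology manifold, and $b_3(W)=b_1(W)=0$ by Poincaré duality. Moreover, $H^2(W,\Q)$ is spanned by algebraic classes: $p_g(W)=0$, and $H^2(W)$ injects into $H^2(X)$ as the orthogonal complement of the exceptional curves. This gives $b_2(W)=\rho(W)=1$, and so $e(W)=1+0+1+0+1=3$.

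An alternative route is $e(W)=e(X)-\sum_i |C_i|$, using that the exceptional sets of rational singularities are trees of $\P^1$'s, so each contributes Euler characteristic $|C_i|+1$ in $X$ and $1$ in $W$. Combined with $e(X)=b_2(X)+2$ and $\rho(W)=b_2(X)-\sum_i|C_i|=1$, this again yields $e(W)=3$.

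The main obstacle is the step $b_2(W)=\rho(W)$ together with the vanishing of the odd Betti numbers, that is, making sure that the Euler characteristic of the singular surface is really $3$. The tree-of-rational-curves computation above settles this cleanly, so I would present that version and cite the gluing formula for Milnor fibers.
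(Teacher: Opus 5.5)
Your proposal is correct and follows the paper's argument. Both use the same Euler-characteristic decomposition through the Milnor fibers, with $b_1(M_i)=0$ by Greuel--Steenbrink. Both also use the same count $\chi_{top}(X)-\sum_i |C_i| = 2+\rho(W)=3$, which relies on the $C_i$ being trees of $\P^1$'s and on $W$ being rational with $\rho(W)=1$.

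The only difference is cosmetic: you use $e(\P^2_{\C})=3$ directly, whereas the paper goes through Noether's formula, $\chi(\O_W)=\chi(\O_{W_t})$ and $K_{W_t}^2=9$, which also makes the $p_g$ terms, where rationality of the singularities enters, explicit.
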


\begin{proof}
Let $p_1, \ldots, p_l$ be the singular points of $W$, and let $\mu_i$ be the Milnor number of the smoothing induced by $(W \subset \mathcal{W}) \to (0 \in \D)$ at $p_i$. By definition, $\mu_i$ is the second Betti number of the corresponding Milnor fiber. The minimal resolution $X \to W$ and the smoothing give the following relations between topological Euler characteristics: $$\chi_{top}(X)=\chi_{top}(W \setminus \cup_{i=1}^l p_i) + \sum_{i=1}^l \chi_{top}(C_i)= \chi_{top}(W_t) -  \sum_{i=1}^l (1+\mu_i) + \sum_{i=1}^l \chi_{top}(C_i).$$ Here we are using that the first Betti number of the Milnor fibers of a smoothing is always $0$, this is a result of Greuel--Steenbrink \cite{GS_1983}. On the other hand, by replacing $\chi(\O_W)=\chi(\O_{W_t})$ and the Noether formula, we obtain  $$K_{W_t}^2 + \sum_{i=1}^l \mu_i = 10 \chi(\O_X)-\chi_{top}(X) + \sum_{i=1}^l \big(b_2(C_i)-b_1(C_i)\big) + 12 \sum_{i=1}^l p_g(p_i \in W),$$ where $p_g(p_i \in W) := \dim_{\C} H^0(\bar X,(R^1 \pi_* \O_X)_{p_i})$ is the geometric genus of the singularity. As all the singularities of $W$ are rational, $W$ is rational, and $\rho(W)=1$, we obtain  $K_{W_t}^2 + \sum_{i=1}^l \mu_i = 10-(\rho(X) - \sum_{i=1}^l |C_i|)=10-1=9$. But $K_{W_t}^2=9$, and so $\mu_i=0$ for all $i$.
\end{proof}

\begin{figure}[htbp]
    \centering
    \includegraphics[scale=0.51]{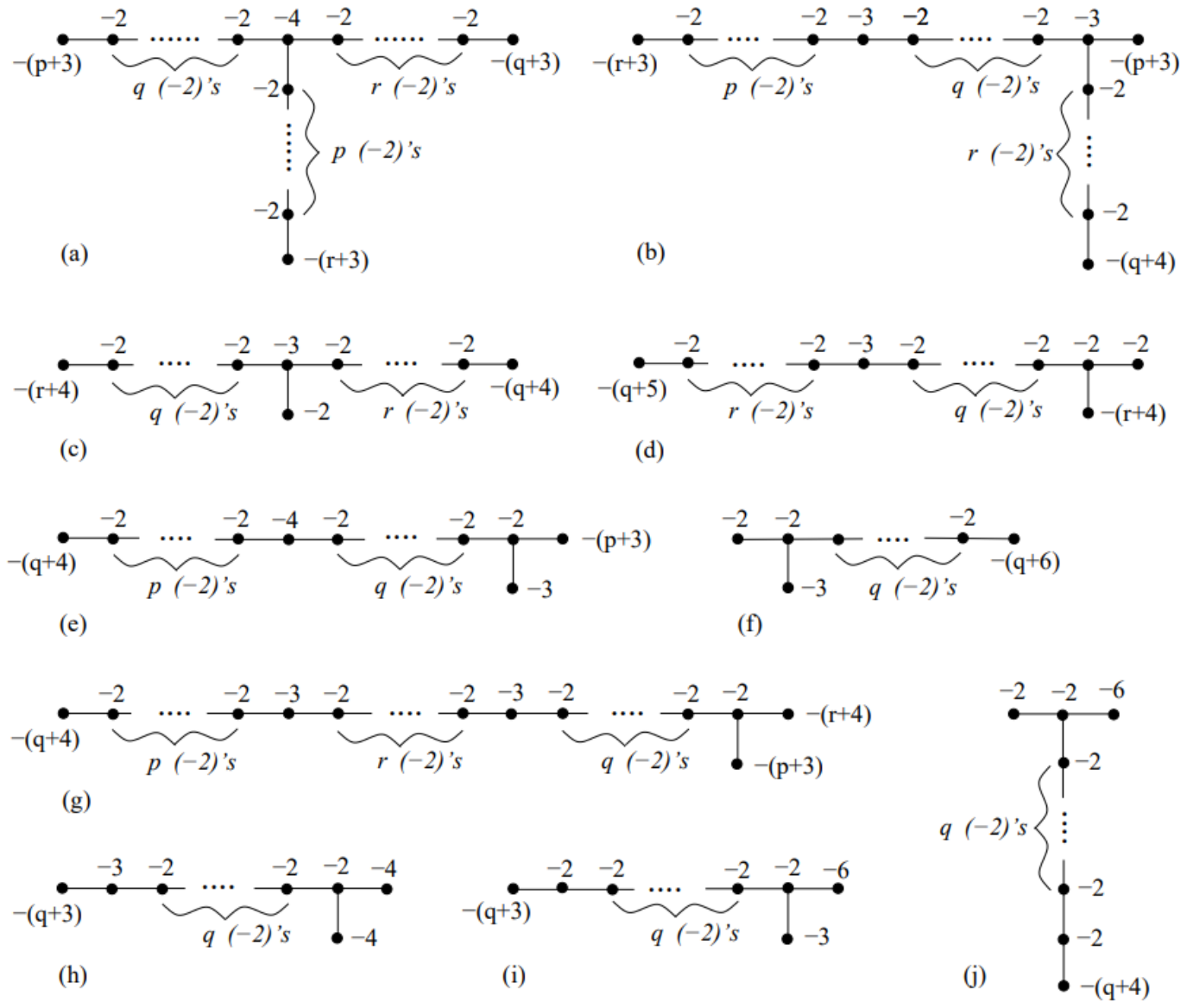}  
    \caption{Star graphs of \QHD singularities of valency 3 from \cite{BS_2011}.}
    \label{fig QHD V3}
\end{figure}

\begin{figure}[htbp]
    \centering
    \includegraphics[scale=0.31]{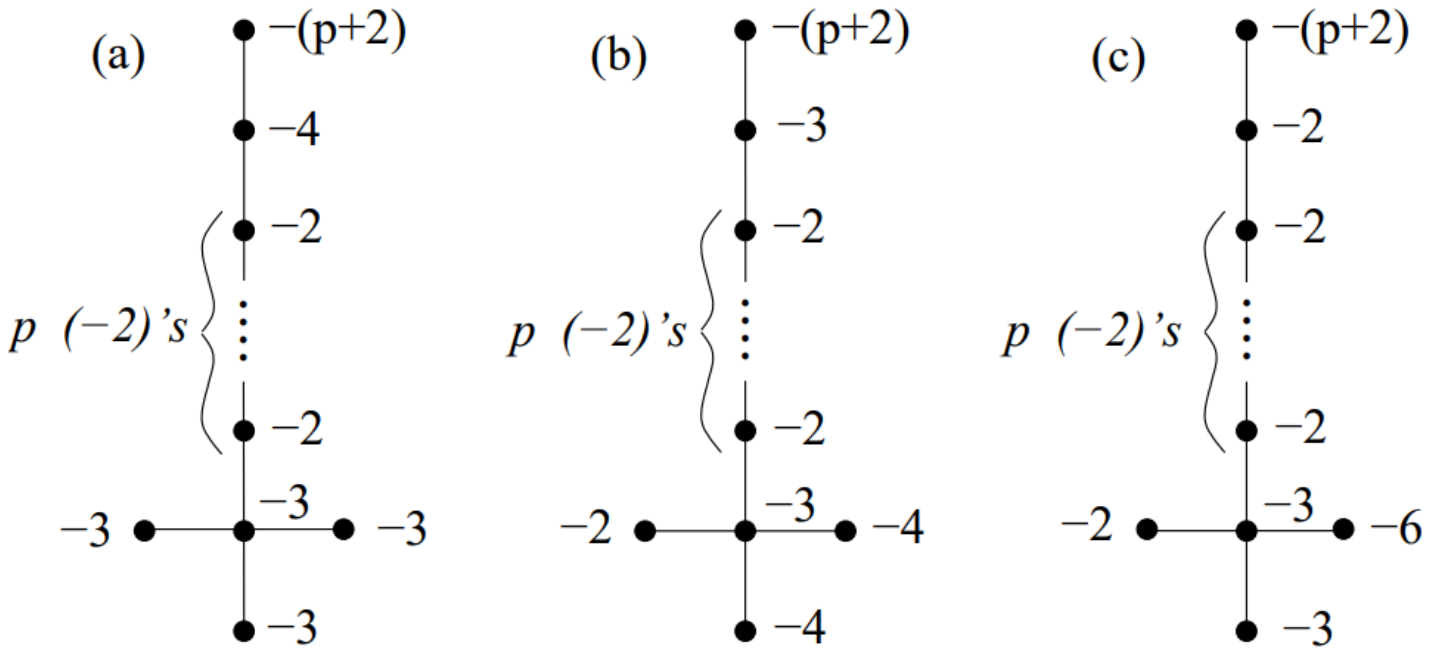}
    \caption{Star graphs of \QHD singularities of valency 4 from \cite{BS_2011}.}
    \label{fig QHD V4}
\end{figure}

In this way, the singularities of $W$ are \QHD. From now on, we assume Wahl's conjecture, i.e. that \QHD singularities are weighted homogeneous, and refer to their classification by the star dual graphs of their minimal resolutions shown in Figures \ref{fig QHD V3} and \ref{fig QHD V4} (see \cite{progrCanedo} for some computations). We call the curve in the star graph meeting three or four other curves the \textit{central curve}, and the chains of curves emanating from it the \textit{legs}.

\begin{lemma}
We have $d=4+2 \# f+s_1-t_1$ and no valency $4$ \QHD singularity in $W$.
\label{dformula}
\end{lemma}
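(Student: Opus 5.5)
The plan is to compute the single number $K_X\cdot C=\sum_{\Gamma\subseteq C}(-\Gamma^2-2)$ in two independent ways and compare the results. The first way is global and uses the blow-up structure $\pi\colon X\to \F_d$. The second way is local and uses that every singularity of $W$ is \QHD, by Lemma \ref{mu=0}. Under Wahl's conjecture these singularities therefore appear in Figures \ref{fig QHD V3} and \ref{fig QHD V4}. Comparing the two expressions should produce the formula for $d$. The same comparison with a valency $4$ graph should give an impossible value, or contradict one of the constraints in Lemma \ref{lemmaq}.

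\emph{Global side.} Write $K_X=\pi^*K_S+E$. Then $K_X\cdot C=K_S\cdot \pi_*C+E\cdot C$. By (R2) and (R4), $\pi_*C$ consists of $\sigma_\infty$ together with the images of the $\# f$ special fibers, so $K_S\cdot\pi_*C=(d-2)-2\# f$. Next I expand $E\cdot C=\sum_i E_i\cdot C_{out,i}+\sum_i E_i\cdot C_{in,i}$. Using Definition \ref{def ShTh} and Lemma \ref{lemmaq} ($s_0=0$, $s_h=0$ for $h\ge 3$), the outer part equals $s_1+2s_2$. The inner part is nonzero only for $E_i\notin T_1\cup T_2$. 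I also plan to record the Euler characteristic bookkeeping: $\rho(X)=2+m=1+|C|$, so $|C|=m+1$. In addition, $q(E+C)=q(E)+q(C)-E\cdot C$ with $q(E)=m$ and $q(C)=l$, since $C$ is a disjoint union of $l$ trees of rational curves. Combining this with $q(E+C)=1+s_1-t_1$ from Lemma \ref{lemmaq} lets me eliminate $E\cdot C$:
$$E\cdot C=m+l-1-s_1+t_1.$$
This gives a global formula for $\sum_{\Gamma}(-\Gamma^2-2)$ in terms of $d,\# f,m,l,s_1,t_1$.

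\emph{Local side.} For each $C_i$, I would extract from the classification in \cite{SSW_2008,BS_2011} the invariant $\sum_{\Gamma\subseteq C_i}(-\Gamma^2-2)$ as a function of $|C_i|$. For a Wahl chain this value is $|C_i|+1$, by the standard identity $\sum(b_j-2)=r+1$. For the valency $3$ star graphs I expect a uniform formula of the form $|C_i|+c_3$. For valency $4$ I expect $|C_i|+c_4$ with $c_4\ne c_3$. Both constants should be checkable family by family from the figures. The vanishing $\mu_i=0$, equivalently $K_{\mathrm{loc}}^2=-|C_i|$, should explain why such a linear formula holds, and I would use it as a consistency check. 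Summing over $i$, with exactly one non-quotient singularity by (R2) and the others being Wahl chains by (R3), gives $\sum_\Gamma(-\Gamma^2-2)=|C|+(l-1)+c$. Here $c$ depends only on the valency of the non-quotient singularity. Substituting $|C|=m+1$ and equating with the global side makes $m$ and $l$ cancel and leaves $d=4+2\# f+s_1-t_1$ exactly when $c=c_3$. If $c=c_4$, the same equation shifts $d$ by a fixed amount. I would then rule this out by parity, or by combining it with the constraint $\# f=t_1+t_2$ and with the geometric fact that $\sigma_\infty$ must sit as a specific vertex in the star graph.

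The main obstacle is the local step: verifying the linear identity for $\sum(-\Gamma^2-2)$ uniformly over all the families in Figures \ref{fig QHD V3} and \ref{fig QHD V4}, and then showing that the valency $4$ constant is genuinely incompatible. If the shifted equation for valency $4$ turns out not to be contradictory by itself, I would fall back on the position of $\sigma_\infty$. The leg structure forced by the fibration $\pi'$ allows at most two special fibers to touch $\sigma_\infty$ in a way that raises its valency. This should bound the valency of the vertex $\sigma_\infty$ or of the central curve by $3$.
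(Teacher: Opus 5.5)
Your global computation is correct and matches the paper's route. The paper just cites \cite[Prop.~3.5]{CU26}, $K_W^2-K_S^2=K_S\cdot\pi(C)-q(E+C)+v_4$, which is exactly your comparison of $K_X\cdot C$: $K_X=\pi^*K_S+E$, then $E\cdot C=q(E)+q(C)-q(E+C)=m+l-1-s_1+t_1$, and $|C|=m+1$. You should compute the local constants rather than leave them as expectations. Wahl chains and all valency $3$ star graphs satisfy $\sum_{\Gamma}(-\Gamma^2-3)=1$, so $c_3=1$. Valency $4$ graphs have $c_4=0$, which is what produces the term $v_4$ in \cite[Prop.~3.5]{CU26}. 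These values give $d=4+2\#f+s_1-t_1-v_4$.

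The genuine gap is the exclusion of valency $4$. The shifted equation $d=3+2\#f+s_1-t_1$ has no numerical obstruction. Nothing constrains the parity of $d$ or of $s_1-t_1$, and $\#f=t_1+t_2$ does not interact with it, so your first two ideas cannot work. Everything rests on your fallback, and its stated mechanism is unjustified. Nothing bounds the number of special fibers at this stage: $\#f=1$ is deduced only afterwards, in Corollary \ref{dcorollary}, from the position of $\sigma_\infty$. Moreover, bounding the valency of $\sigma_\infty$ is not what is needed.

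The paper argues as follows.
\begin{enumerate}
\item Keep $v_4\in\{0,1\}$ unknown. Then $d=4+2\#f+s_1-t_1-v_4\ge 6$, since $\#f\ge 1$ and $s_1-t_1\ge 1$.
\item By the classification and (R2), this places $\sigma_\infty$ at the end of a leg of $C_1$, so the central curve $G$ is a component of a fiber of $\pi'$.
\item A fiber component gains a new fiber neighbour only when a free point on it is blown up, and the exceptional tree over that point contains a $(-1)$-curve.
\item A component has at most two fiber neighbours when it is created. The proper transform of the original fiber, the only component meeting $\sigma_\infty$, has none.
\item If $G$ had valency $4$ in $C_1$, it would have at least three fiber neighbours in $C$, and four if it does not meet $\sigma_\infty$. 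Either way at least two of them are new, so the fiber contains at least two $(-1)$-curves, contradicting (R4).
\end{enumerate}
This count of $(-1)$-curves in a single fiber is the missing idea, and it is a geometric argument, not a consequence of the numerical formula.
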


\begin{proof}
By \cite[Prop.~3.5]{CU26}, we have $K_W^2-K_S^2=K_S \cdot \pi(C)-q(E+C)+v_4$, where $v_4$ is the number of valency $4$ \QHD singularities in $W$. We have $K_W^2-K_S^2=1$, $K_S \cdot \pi(C)=d-2-2 \# f$, and $q(E+C)=1+s_1-t_1$ by Lemma \ref{lemmaq}. Therefore $d=4+2 \# f +s_1-t_1 -v_4 \geq 6$ as $\# f \geq 1$, $s_1 - t_1 \geq 1$ and $v_4=0$ or $1$ by Remark \ref{initialfacts} (R2). By classification of \QHD and Remark \ref{initialfacts} (R2) again, we have that $\sigma_{\infty}$ is at the end of some leg of $C_1$ (star graph of the non-quotient \QHD singularity). The central curve lies in a fiber of $\pi' \colon X \to \P_{\C}^1$, so $v_4=0$; otherwise that fiber would contain at least two $(-1)$-curves.
\end{proof}

\begin{corollary}
We have $\# f=1$, one leg of $C_1$ has to be completely contracted by $\pi$ before the others, $s_1-t_1 \geq 2$ and $d=6+s_1-t_1 \geq 8$.
\label{dcorollary}
\end{corollary}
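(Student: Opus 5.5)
We need to prove the three assertions: $\# f=1$; one leg of $C_1$ is completely contracted by $\pi$ before the others; and $s_1-t_1\geq 2$, so that $d=6+s_1-t_1\geq 8$.

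The plan is to exploit the structure of $C_1$ from Lemma \ref{dformula}. $C_1$ is a valency $3$ star graph, $\sigma_{\infty}$ is the end of one leg, and the central curve $\Gamma_0$ lies in some fiber $F_1$ of $\pi'$. Every other component of $C_1$ is either $\sigma_{\infty}$ or a component of a fiber. The leg containing $\sigma_\infty$ passes from $\sigma_\infty$ (a section) into $F_1$. The other two legs are connected to $\Gamma_0$ and contain no section, so they lie entirely in $F_1$. Hence $C_1\setminus\{\sigma_\infty\}\subset F_1$.

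For $\# f=1$, I would argue as follows. Any other fiber $F_i$ with components in $C$ contributes components to some $C_j$ with $j\neq 1$. Such a $C_j$ is a Wahl chain, since only $C_1$ is non-quotient by (R2). This chain is disjoint from $\sigma_\infty$ and contained in $F_i$. Now $F_i$ consists of a single $(-1)$-curve plus curves of $C$ by (R4). Its component meeting $\sigma_\infty$ is either the $(-1)$-curve or lies in $C$; in the latter case it lies in $C_1$, which is impossible. So the section $\sigma_\infty$ meets the $(-1)$-curve, of multiplicity one. The fiber is then a chain obtained from a $0$-curve by blow-ups not over $\sigma_\infty$. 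Its unique $(-1)$-curve $E$ has multiplicity $1$ and meets $\sigma_\infty$, so $E$ lies at the end of the chain. Contracting $E$ and then successively the resulting $(-1)$-curves must reduce the fiber to a $\P^1$. But $F_i$ minus $E$ is a chain of curves of self-intersection $\le -2$, and if it is nonempty, contracting the end $E$ gives a chain with no $(-1)$-curve. This is a contradiction, so $F_i=E$ is irreducible and contains no $C$-components. Thus $\# f=1$.

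Next, $F_1$ contains a single $(-1)$-curve $E$ and all its other components are in $C_1$ or in Wahl chains. The dual graph of $F_1$ is a tree obtained from a $0$-curve by blow-ups. Contracting $E$ and iterating, the first curves of $C$ to become $(-1)$ are neighbours of $E$. The procedure must contract everything in $F_1$ except the component meeting $\sigma_\infty$. The central curve $\Gamma_0$ has three neighbours in $C_1$, and while it keeps valency $\ge 3$ it cannot become a $(-1)$-curve being contracted in a chain. Before $\Gamma_0$ is contracted, at least one branch at $\Gamma_0$ must be fully contracted. The leg towards $\sigma_\infty$ cannot be fully contracted, since its component meeting $\sigma_\infty$ survives (no blow-ups over $\sigma_\infty$). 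So one of the other two legs is contracted completely first. This is the second assertion.

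Finally, for $s_1-t_1\geq 2$, the key step is to count the $E_i$'s in $S_1\setminus T_1$. With $\#f=1$, we have $t_1+t_2=1$. Lemma \ref{dformula} already gives $s_1-t_1\ge1$; we need to exclude equality. I would analyze the sequence of blow-ups creating $F_1$. Contracting the first leg, then $\Gamma_0$ and the second leg, produces at least two distinct stages where the exceptional $E_k$ meets $C_{out,k}$ in exactly one point. These are the moment the first leg's last curve is created, and the moment $\Gamma_0$'s total transform is created. Their total transforms each intersect $C_{out}$ once, since the legs are chains attached at one point. The main obstacle is verifying rigorously that two such $E_k$ with $h=1$ exist beyond $t_1$. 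I would do this via the projection formula $E_k\cdot C_{out,k}=E_k\cdot \pi^*\pi(\cdot)$ and the combinatorics of the star graph, possibly by checking that $s_1-t_1=1$ would force $F_1\cap C$ to be a chain, contradicting valency $3$. Substituting $\#f=1$ into Lemma \ref{dformula} then gives $d=6+s_1-t_1\ge 8$.
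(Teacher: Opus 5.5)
Your argument for $s_1-t_1\ge 2$ does not go through as written. Of the two stages you propose, the blow-up creating $\Gamma_0$ is in general \emph{not} in $S_1$. Index the blow-ups over $F_1$ as $p_1,\dots,p_m$, with exceptional curves $e_k$. For $k<m$, every fibre component through $p_k$ has strict transform in $C$, because only $e_m$ ends up as the $(-1)$-curve. Hence $E_k\cdot C_{out,k}$ is $1$ or $2$ according to whether $p_k$ is a free point or the intersection point of two components, and $E_k\cdot C_{in,k}=-1$, so $E_k\notin T_1$. In the configurations that actually occur (Section~\ref{s2}), $\Gamma_0=G$ is born at the intersection of the ends $a_1$ and $b_1$ of the two chains, so its $E_k$ lies in $S_2$. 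Similarly, ``the legs are chains attached at one point'' does not show that the blow-up starting the first-contracted leg has $h=1$: you need its centre to lie on $\Gamma_0$ and on no other component.

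The missing idea is the one you only float at the end. Satellite blow-ups preserve all valencies in the fibre graph, and a free blow-up only adds a leaf. Now $\Gamma_0\neq\tilde F$ (otherwise two distinct points of the original fibre are blown up, giving two $(-1)$-curves), so $\Gamma_0$ is born with valency $\le 2$. It can therefore acquire its three $C$-neighbours only through a free blow-up $p_k$ on it with $1<k<m$; here $k\neq m$ because $e_m\notin C$. That $E_k$, together with $E_1$, gives $s_1-t_1\ge 2$. This is exactly the element the paper extracts from ``the leg contracted first''. The same count is also what makes your second assertion rigorous: the valency of $\Gamma_0$ can only drop once a whole branch over a free point of $\Gamma_0$ has been blown down.

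For $\#f=1$ your conclusion is right, but the contradiction is derived incorrectly. Nothing forces $F_i$ to be a chain: several Wahl chains, possibly attached at interior vertices, can meet the $(-1)$-curve. Multiplicity one alone does not place $E$ at an end. And contracting an end $(-1)$-curve whose neighbour is a $(-2)$-curve \emph{does} produce a new $(-1)$-curve, so your stated contradiction fails.

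The paper's reason is simpler. Since nothing is blown up on $\sigma_{\infty}$, the component of a reducible fibre meeting $\sigma_{\infty}$ is the strict transform $\tilde{F}$ of the original fibre. The exceptional curve of the last blow-up over that fibre is a $(-1)$-curve different from $\tilde{F}$. So by (R4), $\tilde{F}\in C$, and it is a neighbour of $\sigma_{\infty}$ in $C_1$. As $\sigma_{\infty}$ is the end of a leg, it has only one such neighbour, hence there is exactly one reducible fibre.
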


\begin{proof}
As $\sigma_{\infty}$ is at the end of a leg, we must have $\# f=1$ as we cannot blow-up over $\sigma_{\infty}$ and there is at most one $(-1)$-curve in fibers of $\pi'$. As we have only one $(-1)$-curve in a fiber of $\pi'$, one leg of $C_1$ has to be contracted completely before the others, and this creates another element in $S_1 \setminus T_1$, and so $s_1-t_1 \geq 2$. The last statement is direct from Lemma \ref{dformula}.   
\end{proof}

\section{Classification} \label{s2}

We now determine all possible minimal resolutions $X \to W$ satisfying the constraints established in the previous section. By Corollary \ref{dcorollary}, we start with a Hirzebruch surface $\F_d$ with $d\geq 8$, and blow up points over one fiber of $\F_d \to \P_{\C}^1$ until we obtain the longest possible chain over that fiber. This is represented in Figure \ref{two legs}, where $X' \to \F_d$ denotes the corresponding composition of blow-ups. Let $G \subset X'$ be the $(-1)$-curve over the chosen fiber. In terms of Hirzebruch--Jung continued fractions, we have $$[a_s,\ldots,a_1,1,b_1,\ldots,b_t]=0,$$ where $a_i,b_j \geq 2$. Thus we can read the $a_i$s from the $b_j$s.

\begin{figure}[htbp]
    \centering
    \includegraphics[scale=1.1]{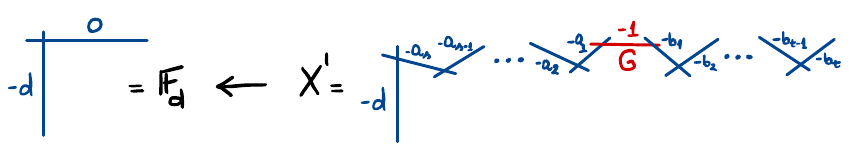}
    \caption{\small{Two legs of $C_1$.}}
    \label{two legs}
\end{figure}

\begin{lemma}
The proper transform of $G$ in $X$ is the central curve of $C_1$.
\label{centralcurve}
\end{lemma}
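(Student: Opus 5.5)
We argue by contradiction, using the structure forced by Corollary \ref{dcorollary}. Recall the setup: $\# f=1$, so all curves of $C$ other than $\sigma_\infty$ lie in a single fiber $F_1$ of $\pi'$. By Remark \ref{initialfacts} (R4), $F_1$ contains exactly one $(-1)$-curve and every other component of $F_1$ belongs to $C$. Also $\sigma_\infty$ is the end of a leg of $C_1$, and the central curve $A$ of $C_1$ lies in $F_1$ (proof of Lemma \ref{dformula}).

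Let $X'\to\F_d$ be as in Figure \ref{two legs}, so the fiber over the chosen point is the chain
$$\sigma_\infty\text{-side}\ \ a_s,\ldots,a_1,\ 1\ (=G),\ b_1,\ldots,b_t.$$
The map $X\to X'$ consists of further blow-ups over this fiber. These can only be centered at points of $G$ or of its proper transforms, since a blow-up at a point of a $(\le -2)$-curve of the chain away from the $(-1)$-curve would produce a second $(-1)$-curve in $F_1$ outside $C$, or a component not in $C$. The key observation is that a curve which is not $G$'s proper transform keeps valency at most $2$ inside $F_1\cup\sigma_\infty$. The curves of the chain in $X'$ meet only their chain neighbours; in $X$ they can gain new neighbours only through blow-ups at points lying on them. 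Since all blow-ups happen on the current $(-1)$-curve, which starts as $G$, the only way to create a component of valency $\ge 3$ is to blow up a general point of a $(-1)$-curve that already has two neighbours.

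First we show that the first additional blow-up is at a general point of $G$. By the ``longest possible chain'' choice, blowing up a node of $G$ with $a_1$ or $b_1$ would simply extend the chain, contradicting maximality. So the next blow-up is at a general point $p\in G$, or there is none. If there is none, $C_1$ is a chain, hence cyclic quotient, contradicting that $C_1$ is non-quotient. After blowing up $p$, the proper transform of $G$ is a $(-2)$-curve with three neighbours: $a_1$, $b_1$, and the new exceptional curve.

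Next we show that $A$ is the proper transform of $G$. Subsequent blow-ups lie over $p$ and produce a third chain attached to $G$. Each of them is at a point of the current $(-1)$-curve that is either a node (extending the chain) or a general point. A general point would create a second branching vertex, contradicting that $C_1$ is a star graph with a single trivalent vertex. Since there are no valency-$4$ singularities (Lemma \ref{dformula}), no further general points of $G$ itself can be blown up either. Hence the only component of $C_1$ with valency $3$ is the proper transform of $G$, which is therefore the central curve.

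The delicate point is the first step: justifying that before reaching $X$ one cannot blow up over nodes of curves other than $G$ in a way that relocates the branching. Here we use Corollary \ref{dcorollary}, namely that one leg, the one containing $\sigma_\infty$ or the $b$-side, is fully contracted first. Together with the uniqueness of the $(-1)$-curve in $F_1$ and the fact that all its other components lie in $C$, this pins the sequence of blow-ups to the chain-then-branch pattern described above.
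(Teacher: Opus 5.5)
There is a genuine gap. Your first step is fine and agrees with the paper: by maximality of the chain in $X'$, the next blow-up is at a free point $p\in G$, and it must exist, since otherwise $C$ is a union of chains.

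\textbf{The missing case.} The gap is the passage from ``$G$ has three neighbours in the fibre'' to ``$G$ has valency $3$ in $C_1$''. In $X$, the neighbour of $G$ on the side of $p$ may be the unique $(-1)$-curve of the fibre, and that curve is not in $C$. This happens exactly when every blow-up after $p$ is at the point where the current $(-1)$-curve meets $G$. Then, over $p$, the fibre consists of the $(-1)$-curve $G'$ meeting $G$, followed by a possibly empty chain of $(-2)$-curves. You never exclude this case, and this case distinction is the whole content of the paper's proof. If the new $(-1)$-curve $G'$ meets $G$, one must blow up further over $G'$. Otherwise $C$ consists of the chain $\sigma_\infty,a_s,\ldots,a_1,G,b_1,\ldots,b_t$ and a chain of $(-2)$-curves, so every singularity is cyclic quotient. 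The further blow-up puts $G'$ into $C$. If $G'$ does not meet $G$, the third neighbour of $G$ already lies in $C$. Either way $G$ is a trivalent vertex of $C_1$. Since $C_1$ is star-shaped with a single node (valency $4$ is excluded by Lemma \ref{dformula}), $G$ is the central curve.

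\textbf{Why your uniqueness argument does not fill the hole.} You claim that blowing up a general point of the current $(-1)$-curve ``creates a second branching vertex''. This is false when that curve is a leaf: the free blow-up then just lengthens the chain. It is exactly what happens in types (f) and (j). There, by Lemma \ref{lemma1}, the contractible leg is a single $(-2)$-curve, obtained by blowing up $p\in G$ and then a free point of the leaf over $p$. So your node-or-branch dichotomy would exclude configurations that actually occur.

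The argument has two further problems. A vertex of valency $3$ in the fibre is a node of $C_1$ only if none of its neighbours is the final $(-1)$-curve. And calling another branch point a ``second'' one presupposes that $G$ is already a node of $C_1$, which is what you are proving, so the argument is circular. For the same reason, your ``key observation'' that curves other than $G$ keep valency at most $2$ is not true for blow-up sequences in general; it follows only a posteriori from the star shape once $G$ is known to be the node.

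\textbf{Smaller errors.}
\begin{itemize}
\item The blow-ups of $X\to X'$ are centred on the current $(-1)$-curve, which after the first one is no longer $G$. They are not centred on proper transforms of $G$, as you first state.
\item In your last paragraph, the leg contracted first in Corollary \ref{dcorollary} is the third leg over $p$. It is not the leg through $\sigma_\infty$, which $\pi$ never contracts, nor the $b$-leg.
\end{itemize}
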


\begin{proof}
The next blow-up over $X'$ must be at a point of $G$ not meeting either of the chains $a_i$ or $b_i$, since the chain is maximal. We then construct a new maximal chain over $G$. If the proper transform of $G$ intersects the new $(-1)$-curve $G'$, then the next blow-up must be over $G'$, since otherwise all exceptional divisors of $\phi$ would form chains. In this case, $G$ is the central curve. On the other hand, if the proper transform of $G$ does not intersect the new $(-1)$-curve, then it must be the central curve. \end{proof}

The third leg of $C_1$ emanating from $G$ will be called the \textit{contractible leg} (the one appearing in Corollary \ref{dcorollary}).

\begin{lemma}
If the contractible leg consists of one curve, then it is a $(-2)$-curve. 
\label{lemma1}
\end{lemma}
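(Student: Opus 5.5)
The idea is to follow the blow-ups that create the contractible leg, using two facts from Remark \ref{initialfacts} (R4) and Corollary \ref{dcorollary}: the unique fiber of $\pi'$ containing components of $C$ has exactly one $(-1)$-curve, and every other curve of that fiber lies in $C$. Let $\Gamma$ be the single curve of the contractible leg and $G$ the central curve (Lemma \ref{centralcurve}). By the proof of Lemma \ref{centralcurve}, $\Gamma$ is created by blowing up a point of (the proper transform of) $G$ that lies on neither of the chains $a_i$, $b_j$. So when $\Gamma$ first appears it is a $(-1)$-curve. Since $\Gamma\subset C$ lies in a minimal resolution, $\Gamma^2\le -2$, so at least one later blow-up is centred at a point of $\Gamma$.

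Step 1 is to show that all later blow-ups on $\Gamma$ are infinitely near a single point $p\in\Gamma\setminus G$. If two distinct points of $\Gamma$ were blown up, or $\Gamma\cap G$ itself, we would get two disjoint trees of exceptional curves over $\Gamma$ (or a curve meeting both $G$ and $\Gamma$). Each such tree ends in a $(-1)$-curve, which contradicts (R4). Blowing up $\Gamma\cap G$ is excluded in any case: it would separate $\Gamma$ from $G$ and insert a new curve between them.

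Step 2 is to let $e_1$ be the exceptional curve over $p$ and examine the next blow-up, if there is one.
\begin{itemize}
\item If it is at $e_1\cap\Gamma$, then $e_1$ becomes a $(-2)$-curve meeting $\Gamma$. It is not the $(-1)$-curve, so by (R4) it lies in $C$. It is connected to $C_1$ through $\Gamma$, which makes the leg have length at least two, a contradiction.
\item If it is at a point of $e_1$ off $\Gamma$, then $e_1$ is again not the final $(-1)$-curve, so it lies in $C$ and meets $\Gamma$. This gives the same contradiction.
\end{itemize}
Hence $e_1$ is the unique $(-1)$-curve of the fiber, no further blow-ups occur over $\Gamma$, and exactly one blow-up was performed on $\Gamma$. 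Therefore $\Gamma^2=-1-1=-2$.

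The main obstacle is making Step 1 airtight. One must check that nothing other than $G$, $e_1$, and blow-ups on $\Gamma$ can change $\Gamma^2$. One must also check that the leg being ``contracted first'' (Corollary \ref{dcorollary}) forces the $(-1)$-curve to be adjacent to $\Gamma$, rather than lying elsewhere in the fiber. I would handle this by noting that every curve meeting $\Gamma$ other than $G$ is either the $(-1)$-curve or a component of $C$. In the latter case it would lengthen the leg or raise the valency of $\Gamma$, contradicting the hypothesis that the leg is a single end curve of the star graph.
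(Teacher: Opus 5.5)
There is a genuine gap, and it is in the first bullet of Step 2.

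\textbf{The error.} Blowing up the point $e_1\cap\Gamma$ does \emph{not} leave $e_1$ meeting $\Gamma$. The new exceptional curve $e_2$ is inserted between them. Afterwards $e_2$ is the $(-1)$-curve adjacent to $\Gamma$, and $e_1$ is a $(-2)$-curve meeting only $e_2$. So $e_1$ lies in $C$ but not in $C_1$, and the leg is still the single curve $\Gamma$.

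Repeating the blow-up at the point where $\Gamma$ meets the newest curve produces the chain $G-\Gamma-e-[2,\ldots,2]$. Here $G^2=-2$, $\Gamma^2=-\gamma$ for any $\gamma\ge 2$, and the residual chain has $\gamma-2$ curves. For example, $[2,3,1,2]$ is such a chain. This fiber has exactly one $(-1)$-curve, all its other curves lie in $C$, and the contractible leg is a single $(-3)$-curve. So (R4) and the ``single end curve'' hypothesis, which are all you use, cannot prove the lemma.

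Your closing paragraph also only controls curves \emph{meeting} $\Gamma$. It says nothing about components of $C$ over $p$ that are disjoint from $\Gamma$. The same issue arises when $G^2\le -3$. Then $\Gamma$ is the last of several curves created on $G$, so it already has a second neighbour when it appears, and that neighbour must be split off in the same way.

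\textbf{The missing idea.} The curves beyond the $(-1)$-curve form a separate connected component of $C$. Hence they give another (cyclic quotient) singularity of $W$, which must be a Wahl singularity by (R3). This is exactly how the paper argues:
\begin{itemize}
\item By the classification, the central curve has $G^2\in\{-2,-3,-4\}$.
\item Over $G$ the configuration is $\Gamma$, then the $(-1)$-curve, then a possibly empty residual chain.
\item Contractibility back to $G^2=-1$ forces the residual chain to be $[2,\ldots,2]$, $[2,\ldots,2,3]$ or $[2,\ldots,2,3,2]$ respectively, with $\gamma-2$ leading $2$s.
\item None of these is ever a Wahl chain, since a Wahl chain of length $r$ has entries summing to $3r+1$.
\item The only survivor is $G^2=-2$ with empty residual chain, which forces $\gamma=2$.
\end{itemize}
Your Step 1 and the second bullet of Step 2 are fine. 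Without this Wahl-chain input, however, the conclusion $\Gamma^2=-2$ does not follow.
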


\begin{proof}
Since $G$ is the central curve of $C_1$, its self-intersection must be $-2$, $-3$, or $-4$ by the classification of \QHD singularities. Over $G$, we have either the contractible leg together with a $(-1)$-curve, or the contractible leg and a Wahl chain joined by a $(-1)$-curve. If $G^2=-4$, then the residual Wahl chain is $[2,\ldots,2,3,2]$, which is never a Wahl chain. If $G^2=-3$, then the residual Wahl chain is $[2,\ldots,2,3]$, which again is never Wahl. Finally, if $G^2=-2$, then the only possibility is that the contractible leg is a single $(-2)$-curve and there is no Wahl chain. 
\end{proof}

\begin{lemma}
The only possible legs $[d,a_s,\ldots,a_1]$ and $[b_1,\ldots,b_t]$ of $C_1$ and \QHD types (from the classification in Figure \ref{fig QHD V3}) are:
\begin{itemize}
\item[(L1)] $[d,\underbrace{2,\ldots,2}_{\alpha}]$ and $[\alpha+1]$, with $\alpha \geq 1$; types (a), (b), (c), (f), (j).
\item[(L2)] $[d,\underbrace{2,\ldots,2}_{\beta},3,\underbrace{2,\ldots,2}_{\alpha}]$ and $[\alpha+2,\beta+2]$, with $\alpha, \beta \geq 0$; not possible.
\item[(L3)] $[d,\underbrace{2,\ldots,2}_{\beta},4,\underbrace{2,\ldots,2}_{\alpha}]$ and $[\alpha+2,2,\beta+2]$, with $\alpha, \beta \geq 0$; not possible.
\item[(L4)] $[d,\underbrace{2,\ldots,2}_{\gamma},3,\underbrace{2,\ldots,2}_{\beta},3,\underbrace{2,\ldots,2}_{\alpha}]$ and $[\alpha+2,\beta+3,\gamma+2]$, with $\alpha, \beta, \gamma \geq 0$; not possible.
\end{itemize}
\label{possiblelegs} 
\end{lemma}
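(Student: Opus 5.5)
Because $G$ is the central curve of $C_1$ by Lemma \ref{centralcurve}, the two legs meeting $G$ in $X'$ are the chain $[d,a_s,\ldots,a_1]$ (which contains $\sigma_\infty$) and the chain $[b_1,\ldots,b_t]$. They are Hirzebruch--Jung dual, since $[a_s,\ldots,a_1,1,b_1,\ldots,b_t]=0$. The plan is to list which dual pairs can occur as two legs of a valency $3$ star graph in Figure \ref{fig QHD V3}, subject to one extra constraint: $\sigma_\infty$ is the end curve of its leg and has self-intersection $-d$ with $d\geq 8$ (Corollary \ref{dcorollary}). Note also that the $b$-leg is not the contractible leg, since the contractible leg is created only after $X'$.

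\textbf{Step 1: reduce to a finite list of shapes.} I will inspect the legs of the families (a)--(j) in Figure \ref{fig QHD V3}. In each family, at least one leg is short, and its entries are integers bounded in terms of the other legs' parameters. I would argue that the $b$-leg must be such a short leg. Indeed, the leg containing $\sigma_\infty$ has a long tail of $2$'s ending in an entry at least $8$, and the dual of the $b$-leg has to match the rest of that leg. Duals of chains of length $t$ have at most $t-1$ entries $\geq 3$. The entries $\geq 3$ among the $a_i$ correspond to breaks in the $b$-chain. Checking the leg lengths allowed by the figure, I expect the $b$-leg to have length at most $3$, and the $a$-part to have at most two entries $\geq 3$, each equal to $3$ or $4$. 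The explicit duality $[2^{\beta},3,2^{\alpha}]^\vee=[\alpha+2,\beta+2]$, together with its analogues for $[2^{\beta},4,2^{\alpha}]$ and $[2^\gamma,3,2^\beta,3,2^\alpha]$, then produces exactly the four shapes (L1)--(L4).

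\textbf{Step 2: match (L1) against the figure.} A leg $[d,2,\ldots,2]$ together with a single-curve leg $[\alpha+1]$ has to be located among the families. I expect this to occur only in types (a), (b), (c), (f), (j), where one leg is a single curve and another leg is a string of $2$'s capped by a large entry.

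\textbf{Step 3: exclude (L2)--(L4).} This step is the main obstacle, because it requires a type-by-type search through the figure. For each family, I would try every assignment of the $a$-leg and the $b$-leg to two of its three legs and compare entries. The $d$-end forces the parameter controlling that large entry, and this in turn fixes the entries of the $b$-leg in the star graph. I expect a contradiction in each case, of one of the following kinds:
\begin{itemize}
\item the $b$-leg in the graph has the wrong length;
\item its entries cannot equal $\alpha+2,\beta+2$ (respectively $\alpha+2,2,\beta+2$ or $\alpha+2,\beta+3,\gamma+2$) for the $\alpha,\beta,\gamma$ read off from the $a$-leg;
\item the third leg, which is the contractible leg, is incompatible with Lemma \ref{lemma1} or with being contractible, so it cannot be blown down to a smooth point that leaves $G$ as a $(-1)$-curve at the right stage.
\end{itemize}
I would carry out this search systematically, possibly with computer checks as in \cite{progrCanedo}.
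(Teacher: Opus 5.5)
Your framework is the paper's: the two legs through $G$ are Hirzebruch--Jung dual, and the $a$-leg ends in $\sigma_\infty$ with $d\geq 8$. Everything is then decided by matching against the valency-$3$ list, using $d\ge 8$, leg lengths, and Lemma \ref{lemma1}. The gap is that this lemma \emph{is} that finite inspection, and you never carry it out: every substantive step is phrased as ``I expect''.

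Where you do commit to an outcome, it is wrong. In Step 2 you predict that shape (L1) occurs only in types (a), (b), (c), (f), (j). Type (i) also carries a pair of legs of the form $[d,2,\ldots,2]$, $[\alpha+1]$. It is not removed by matching at all; it is excluded only by Lemma \ref{lemma1}, applied to its remaining leg, which would have to be the contractible one. Your plan invokes Lemma \ref{lemma1} only in Step 3 for (L2)--(L4), so it would leave (i) standing.

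Step 3 likewise needs the actual list. (L4) occurs only in type (g), and (L4) and (L3) are both killed by Lemma \ref{lemma1}. (L2) occurs in types (h), (d), (b):
\begin{itemize}
\item (h) is excluded by Lemma \ref{lemma1};
\item (d) is excluded because one of its legs has exactly two curves;
\item in (b), the matching forces $r=1$, hence $d=r+3=4$, contradicting Corollary \ref{dcorollary}.
\end{itemize}

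Your Step 1 is also not a sound reduction. You argue the $b$-leg is short because the $\sigma_\infty$-leg ``has a long tail of $2$'s'', but that presupposes the (L1) shape. Moreover, the constraints you state (at most $t-1$ entries $\geq 3$, each equal to $3$ or $4$) do not single out (L1)--(L4); for example, they allow one $3$ and one $4$. The paper runs the reduction the other way. It reads off from the classification every leg whose end weight can be at least $8$, which gives exactly the four $a$-legs in (L1)--(L4), and only then dualizes. If you want to go through $t$, the right tool is the identity $\sum_j (a_j-2)=t-1$ for dual chains. That identity yields exactly (L1)--(L4) once $t\leq 3$ is known, but $t\le 3$ still has to be extracted from the classification, not assumed.
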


\begin{proof}
First, we must have $s,t \geq 1$, because there is only one $(-1)$-curve over the fiber. By classification of \QHD star graphs of valency $3$, we have listed all possibilities for the leg $[d,a_s,\ldots,a_1]$. After that, we have listed the corresponding duals $[b_1,\ldots,b_t]$. For (L4), the only possible is $(g)$. By Lemma \ref{lemma1}, this is not possible. Same for (L3). For (L2) we have types (h), (d), (b). For (h) same argument. For (d), we have contradiction because one leg has $2$ curves. For (b), we have $r=1$, but then $d=r+3=4$ a contradiction. For (L1) we have types (a), (b), (c), (f), (i), (j), but (i) is impossible using Lemma \ref{lemma1}.
\end{proof}

\begin{proof}[Proof of Theorem \ref{main}]
Given the degeneration $(W \subset \mathcal{W}) \to (0 \in \D)$ with only rational singularities, Lemma \ref{mu=0} implies that $W$ has only \QHD singularities. Assuming Wahl's conjecture, these singularities are weighted homogeneous. If they are log canonical, then they are log terminal by \cite{Hack04}, and the degenerations are classified by Hacking--Prokhorov \cite[Cor.~1.2]{HP10}. Otherwise, we are left with the possibilities listed in Lemma \ref{possiblelegs} (L1), which we now analyze with the same notation and the $p,q,r$ notation in Figure \ref{fig QHD V3}.

\textbf{Type (a):} We have $d=p+3$, and $\alpha+1=q+3$ or $r+3$. If $\alpha+1=r+3$, then $p=0$, whereas $d\geq 8$, a contradiction. Hence $\alpha=q+2$. On the other hand, we also have $\alpha=q$, which is impossible. Thus, this type cannot occur.

\textbf{Type (b):} We have $d=q+4 \geq 8$ and $\alpha+1=p+3$, so $\alpha=p+2$ and hence $r=\alpha=p+2$. There are two cases for the contractible leg: (I) $[\underbrace{2,\ldots,2}{q},3,\underbrace{2,\ldots,2}{p},p+5]$, or (II) the same leg followed by a $(-1)$-curve and a Wahl chain. In case (I), we must subtract $1$ from some entry in the leg and continue blowing down until we reach $G$. This subtraction must occur on a $2$ in the leg. It cannot be one of the first $q$ entries, since $q \geq 4$, nor can it be one of the two rightmost $2$s in $[\ldots,\underbrace{2,\ldots,2}_{p},\ldots]$. Hence the only possibility is the leftmost $2$ in this block. Then the only possibility is $p=2$, which forces $q=4$. This is type (b) with $p=2$ and $q=r=4$. For (II), one checks by the duality of continued fractions that the Wahl chain must have the form $[q+3,p+3,\underbrace{2,\ldots,2}_{p+3}]$. Therefore, $p=2$ and $q=4$. 

\textbf{Type (c):} We have $d=q+4 \geq 8$ and $\alpha+1=2$ or $r+4$. In the latter case, we must have $q=0$, which is impossible. Thus $\alpha=1=r$. As before, there are two cases for the contractible leg: (I) $[\underbrace{2,\ldots,2}_{q},5]$, or (II) the same leg followed by a $(-1)$-curve and a Wahl chain. In case (I), we can only subtract $1$ from the rightmost $2$, forcing $q=4$. This is type (c) $q=4$, $r=1$. In case (II), again by the duality of continued fractions, the Wahl chain must have the form $[2,2,2,q+3]$, which again forces $q=4$.

\textbf{Type (f):} We have $d=q+6$ and, by Lemma \ref{lemma1}, $\alpha+1=3$. Hence $\alpha=2=q$, and this is type (f) with $q=2$.

\textbf{Type (j):} We have $d=q+4$ and, by Lemma \ref{lemma1}, $\alpha+1=6$, and so this is type (j) with $q=4$.

These are all the combinatorial possibilities. On the other hand, by Artin's contractibility criterion, starting with $\F_8$ we can contract the corresponding configurations in the composition of blow-ups $X \to \F_8$, obtaining the six surfaces shown in Figure \ref{mainfig}. Let $\phi \colon X \to W$ denote the contraction. We now prove the existence of a smoothing $(W \subset \mathcal{W}) \to (0 \in \D)$ with $W_t=\P_{\C}^2$ for $t \neq 0$. By \cite[Thm.~5.1]{CU26}, there are no local-to-global obstructions to deforming $W$, since the unique non-Wahl singularity is a valency $3$ \QHD singularity (see \cite[Rem.~5.2]{CU26}). As each singularity of $W$ admits a \QHD smoothing, these local smoothings globalize to give the desired smoothing of $W$. Wahl \cite{Wahl_2013} proved that $\mathcal{W}$ is log terminal and $\Q$-Gorenstein, that is, it is locally the quotient of an equivariant deformation of the canonical cover of the singularities of $W$. In particular, $-mK_{\mathcal{W}}$ is Cartier for some $m \in \Z_{>0}$. Since $K_W^2=9$ and $-K_W$ is ample, it follows that $K_{W_t}^2=9$ and $-K_{W_t}$ is ample by \cite[Prop.~1.41]{KM_1998}. Therefore $W_t=\P_{\C}^2$.
\end{proof}

\begin{remark}
It is straightforward to verify that the image of the $(-1)$-curve in the minimal resolutions of $W_c$ and $W_b$ (see Figure \ref{mainfig}) defines slidings \cite[Def.~3.17]{CU26} to the surfaces $W_{c5}$ and $W_{b13}$, respectively. Hence, at least for these two surfaces, there is a common degeneration between the log terminal and non-log terminal rational degenerations of $\P_{\C}^2$.
\label{theslidings}
\end{remark}

\section{MMP over the new degenerations} \label{s3}

Let $W$ be one of the six surfaces in Theorem \ref{main} with a non-log terminal singularity $(p \in W)$. Let $(W \subset \mathcal{W}) \to (0 \in \D)$ be a \QHD smoothing. As mentioned above, Wahl \cite{Wahl_2013} proved that $\mathcal{W}$ is log terminal and $\Q$-Gorenstein. He also showed the existence of a weighted blow-up at $(p \in \mathcal{W})$, which is called the Seifert partial resolution of $\mathcal{W}$ at $p$ (see \cite[Def.~5.3]{CU26}), producing a new degeneration over $\D$
$$(W' \cup Z) \subset \mathcal{W}' \to (W \subset \mathcal{W}) \to (0 \in \D),$$
such that (see \cite[Sect.~7]{CU26}):
\begin{itemize}
    \item $W'$ and $Z$ are projective surfaces intersecting along a curve $\Delta\simeq \P_{\C}^1$.
    \item The restriction $(\Delta \subset W') \to (p \in W)$ is the Seifert partial resolution of the \QHD singularity $(p \in W)$, extracting only the central curve $\Delta$ of the star resolution. The surface $W'$ has three cyclic quotient singularities $\frac{1}{m_i}(1,q_i)$ corresponding to the three legs of the star dual graph.
    \item The surface $Z$ is the compactified Milnor fiber of the \QHD smoothing of $(p \in W)$ (see \cite[Def.~5.5]{CU26}). It has three cyclic quotient singularities $\frac{1}{m_i}(1,m_i-q_i)$ lying on $\Delta$ at the same points as those on $W'$. The fiber $W' \cup Z$ has orbifold double normal crossing singularities as in \cite[Sect.~5]{Hack12}.
\end{itemize}

Let $\Gamma^-$ be the image in $W$ of the $(-1)$-curve in the fiber of $\pi' \colon X \to \P_{\C}^1$. Let us denote by the same letter its proper transform in $W'$.

\begin{lemma}
The curve $\Gamma^- \subset W'$ induces an extremal neighborhood of flipping type $(\Gamma^- \subset \W) \to (q \in \Y)$ between $3$-folds over $\D$. It is semistable of type k1A (if $W$ has one singularity) or k2A (if $W$ has two singularities), as in \cite{HTU17}.
\label{nbhd}
\end{lemma}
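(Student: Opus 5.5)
Proving Lemma \ref{nbhd} comes down to three things: (i) $\Gamma^-$ is a smooth rational curve, contractible in the threefold $\mathcal{W}'$, with $K_{\mathcal{W}'}\cdot\Gamma^-<0$, so that it spans a $K$-negative extremal ray over $\D$; (ii) the contraction is small, i.e.\ it does not contract a divisor; (iii) the singularities of $\mathcal{W}'$ along $\Gamma^-$ are those of a semistable k1A or k2A neighborhood in the sense of \cite{HTU17}.

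\textbf{Where $\Gamma^-$ meets the singular locus.} I would start in the minimal resolution $X$. The $(-1)$-curve $\Gamma$ in the special fiber of $\pi'$ meets exactly two curves of $C$, and each of them transversally once. Only two configurations occur, and I would check them on each of the six surfaces of Figure \ref{mainfig}.
\begin{itemize}
\item If $W$ has one singularity, $\Gamma$ meets the contractible leg and one of the other two legs of $C_1$, at curves that are not the central curve. In $W'$, after extracting only $\Delta$, the curve $\Gamma^-$ passes through two of the three cyclic quotient points $\frac{1}{m_i}(1,q_i)$ of $W'$ on $\Delta$.
\item If $W$ has two singularities ($W_{c5}$, $W_{b13}$), $\Gamma$ meets one leg of $C_1$ and one end of the Wahl chain.
\end{itemize}
One must also make sure $\Gamma^-$ does not meet $\Delta$ anywhere else. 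This holds because the proper transform of $G$ is the central curve (Lemma \ref{centralcurve}) and $\Gamma$ is disjoint from it.

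\textbf{Local structure of $\mathcal{W}'$ at these points.} At a point of $\Gamma^-\cap\Delta$, the threefold $\mathcal{W}'$ is the orbifold double normal crossing $(xy=t)/\mu_{m_i}$ described in \cite[Sect.~5]{Hack12}. Hence the germ of $\mathcal{W}'$ along $\Gamma^-$ is an extremal neighborhood whose special fiber is the reducible $W'\cup Z$, with $\Gamma^-\subset W'$. This is exactly the semistable setting of \cite{HTU17}, in which one passes to the neighborhood along $W'$ with the induced smoothing. The Wahl singularity in the two-singularity case supplies the second ``A'' point, which gives the k1A/k2A dichotomy once the ``one non-Gorenstein point versus two'' bookkeeping is matched with \cite{HTU17}.

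\textbf{Negativity and smallness.} I would compute $K_{W'}\cdot\Gamma^-$ via discrepancies. We have $K_X\cdot\Gamma=-1$, and
\[
K_{W'}\cdot\Gamma^- = -1+\sum_j \delta_j,
\]
where the $\delta_j$ are the (negative of the) discrepancy coefficients of the two leg end-curves that $\Gamma$ meets. These are explicit from the chains in Figure \ref{mainfig}, or can be read off via adjunction on the chains. I would then use $K_{\mathcal{W}'}|_{W'}=K_{W'}+\Delta$, together with $\Delta\cdot\Gamma^-=\sum_j 1/m_j$ at the two orbifold points, to get
\[
K_{\mathcal{W}'}\cdot\Gamma^-=K_{W'}\cdot\Gamma^-+\Delta\cdot\Gamma^-,
\]
and I expect this to be negative. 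A cleaner route to the sign avoids the arithmetic: $\rho(W)=1$ and $-K_W$ is ample, so $K_{\mathcal{W}}\cdot\phi_*\Gamma<0$. Pulling back, $K_{\mathcal{W}'}$ equals the pullback of $K_{\mathcal{W}}$ plus $a(W'+Z)$ or $aZ$, and since $W'+Z$ is a fiber, $K_{\mathcal{W}'}\cdot\Gamma^-=K_{\mathcal{W}}\cdot\Gamma^- + aZ\cdot\Gamma^-$. Making this negative requires controlling the sign of $a$ via Wahl's log-terminality \cite{Wahl_2013}.

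For contractibility and smallness: $(\Gamma^-)^2<0$ on $W'$, because $W'$ has Picard number $2$ (it is $W$ with $\Delta$ extracted) and $\Gamma^-$ and $\Delta$ span the cone. Contracting $\Gamma^-$ inside $W'$ gives a surface, and by \cite{HTU17} the neighborhood is then flipping rather than divisorial. I would prove this by showing the contraction of $\Gamma^-$ on $W'$ is not a contraction to the general fiber. The main obstacle is pinning down exactly the sign of $K_{\mathcal{W}'}\cdot\Gamma^-$ and, through the numerical criteria of \cite{HTU17}, excluding the divisorial case; I would do both by explicit computation on each of the six graphs.
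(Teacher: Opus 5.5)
There is a genuine gap, and it sits exactly where k1A versus k2A is decided: your description of how $\Gamma$ meets $C$ in the one-singularity case is wrong.

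\textbf{The configuration of $\Gamma$.} After $X'$, every blow-up lies over a single point of $G$ away from the $a$- and $b$-chains (this is the content of the proof of Lemma \ref{centralcurve}). So $\Gamma$ is disjoint from those two legs. Moreover, the fiber of $\pi'$ is a tree and the components of $C_1$ in it form a connected subtree. Hence $\Gamma$ meets $C_1$ in exactly one curve, lying in the contractible leg.

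For $W_j,W_f,W_c,W_b$ this gives $\Gamma\cdot C=1$, and $\Gamma^-\subset W'$ passes through exactly one singular point of $W'$, the cyclic quotient point of the contractible leg. That point is the unique non-Gorenstein point of $\W'$ on $\Gamma^-$ (a terminal point $\frac{1}{m}(1,-1,a)$), which is why the neighborhood is k1A. Only for $W_{c5},W_{b13}$ does $\Gamma$ also meet the Wahl chain, giving a second such point and hence k2A; that part of your description is right.

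With your configuration, both cases would have two non-Gorenstein points on $\Gamma^-$. The ``bookkeeping'' you defer to \cite{HTU17} would then output k2A for one singularity, contradicting the statement.

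The error also spoils your numerics. $\Delta\cdot_{W'}\Gamma^-$ is a single local term that depends on which curve of the leg $\Gamma$ meets, not $\sum_j 1/m_j$ over two points. For $W_c$ one needs $K_{W'}\cdot\Gamma^-=-\frac{3}{7}$ and $\Delta\cdot_{W'}\Gamma^-=\frac{5}{21}$ to get the value $-\frac{4}{21}$. For $W_j,W_f$ the value is simply $-1+\frac{1}{2}$.

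\textbf{Matching with \cite{HTU17}.} Invoking the orbifold double normal crossing structure does not by itself put you in the setting of \cite{HTU17}. Near $\Gamma^-$ the special fiber is the reducible $W'\cup Z$, and the point $\Gamma^-\cap\Delta$ is in general not a Wahl singularity of $W'$ (e.g.\ $\frac{1}{2}(1,1)$ for $W_j,W_f$). The paper closes this for the k1A case as follows:
\begin{enumerate}
\item $\W'\to\W$ contracts only $Z$, so the Milnor fiber around $\Gamma^-$ equals that of the given smoothing around $\Gamma^-\subset W$.
\item That Milnor fiber has $b_2=1$, because $\Gamma^-\cong\P^1$ and the singularities have rational homology disk smoothings.
\item Then \cite[Prop.~2.1]{HTU17} identifies the neighborhood with one of the k1A neighborhoods there.
\end{enumerate}
You need this step or an equivalent one.

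\textbf{Negativity and smallness.} Your pullback route to $K_{\W'}\cdot\Gamma^-<0$ is a good uniform alternative to the paper's six explicit values, but the sign of $a$ comes from non-log-canonicity, not from log terminality. Write $f\colon\W'\to\W$. Restricting $K_{\W'}+W'+Z=f^*(K_{\W}+W)+aZ$ to $W'$ gives $a=1+e$, where $e<-1$ is the discrepancy of $\Delta$ over $(p\in W)$. So $a<0$, and $K_{\W'}\cdot\Gamma^-=K_W\cdot\phi_*\Gamma+a\,\Delta\cdot_{W'}\Gamma^-<0$. Log terminality only gives $a>-1$, which does not help.

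Finally, $(\Gamma^-)^2<0$ does not follow from $\rho(W')=2$. It holds because $\Gamma$, together with the contractible leg (and the Wahl chain when present), is the exceptional locus of $X\to X'$.
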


\begin{proof}
First, for each $W$ we have that $\Gamma^- \cdot K_{\W'}=\Gamma^- \cdot (K_{W'} + {\Delta}|_{W'})$ is equal to $$-\frac{1}{2}, -\frac{1}{2}, -\frac{4}{21}, -\frac{11}{144}, -\frac{4}{21 \cdot 5},  -\frac{11}{144 \cdot 13}$$ for the types (j), (f), (c), (b), (c5) and (b13), respectively. Moreover, we have ${\Gamma^-}^2<0$ as $\Gamma^-$ is contractible in $W'$, and so it is an extremal ray for $K_{W'}+{\Delta}|_{W'}$. We now follow steps (1)--(7) in \cite[Sect.~7]{CU26}. The only subtlety arises for semistable $k1A$ extremal neighborhoods. Since there is a birational morphism $\W' \to \W$ over $\D$ contracting only $Z$, the Milnor fiber around $\Gamma^-$ is the same as the Milnor fiber of the smoothing around $\Gamma^- \subset W$. Its second Betti number is equal to $1$, since $\Gamma^- \simeq \P_{\C}^1$ and the singularity admits a \QHD smoothing. Therefore, we obtain a semistable flipping extremal neighborhood of type $k1A$ with second Betti number equal to $1$. By \cite[Prop.~2.1]{HTU17}, it is one of the $k1A$ neighborhoods in \cite{HTU17}.
\end{proof}

\begin{proof}[Proof of Theorem \ref{flip}]
First, we note that the flip $(\Gamma^+ \subset \W^+) \to (q \in \Y)$ has the same local description around $\Gamma^+$ in both the $k1A$ and $k2A$ cases, since $k1A$ and $k2A$ extremal neighborhoods are deformation equivalent by \cite{HTU17}. Therefore, we may regard it as the flip of a $k2A$ extremal neighborhood. We then apply directly the argument of step (7) in \cite[Sect.~7]{CU26}. The flipped curve $\Gamma^+$ is contained in $Z^+$ and not in $W^+$. Note that, for each of the six possible surfaces $W$, contracting $\Gamma^-$ produces a nonsingular point on the image surface in $\Y$, and hence on $W^+$. Consequently, the corresponding point on $Z^+$ is also nonsingular. Straightforward computations give the list of the self-intersection of the flipped curves $\Gamma^+$ and the additional Wahl chain, corresponding to the Wahl singularity appearing after the flip for each of the six surfaces $W$. We also record the continued fraction of the two cyclic quotient singularities on $Z^+$ lying on $\Delta$, where $\Delta^2=-1$.

\bigskip 
\noindent
(j): ${\Gamma^+}^2=-2$ and no Wahl chain; $[2,2,2,2,2,2,7,1,2,2,2,2,2]$.

\noindent
(f): ${\Gamma^+}=-2$ and no Wahl chain; $[2,2,2,2,2,2,4,1,2,2]$.

\noindent
(c) and (c5): ${\Gamma^+}^2=-1$ and $[2,2,2,7]$; $[2,2,2,2,2,2,3,1,2]$.

\noindent
(b) and (b13): ${\Gamma^+}^2=-1$ and $[7,5,2,2,2,2,2]$; $[2,2,2,2,2,2,6,1,2,2,2,2]$.
\bigskip 

Since $\rho(W^+)=1$, we have that $K_{W^+}+\Delta^+$ is ample because $\Delta^+ \cdot (K_{W^+}+\Delta^+)=-\frac{1}{a}-\frac{1}{b}<0$, where $a$ and $b$ are the orders of the two cyclic quotient singularities on $W^+$. We have $(\Delta^+)^2>0$ in $W^+$, whereas $(\Delta^+)^2<0$ in $Z^+$. Hence
$\Delta^+ \cdot K_{\W^+}=\Delta^+ \cdot (K_{W^+}+\Delta^+)<0$,
and therefore (as in the proof of \cite[Lem.~9.5]{Hack04}) there is a divisorial contraction
$(W^+ \cup Z^+ \subset \W^+) \to (\bar{Z} \subset \bar{\W})$ contracting $W^+$. In all six cases, the contraction of $\Delta^+$ in $Z^+$ is a smooth point, as follows from the continued fractions listed above. Moreover, $\rho(Z^+)=2$, and hence $\rho(\bar{Z})=1$, giving the claimed list. The resulting family must be $\Q$-Gorenstein.
\end{proof}

\section{BPS example} \label{s4}

Recently, Beke, Plamenevskaya and Starkston \cite{BPS26} constructed an infinite family of rational singularities which are not weighted homegeneous, but their minimal resolutions admit a symplectic rational blow-down. These examples admit a Stein filling which is a rational homology disk, but do not admit \QHD smoothings. 

Suppose that some of these examples satisfy Remark \ref{initialfacts}. Then it is easy to verify that the configuration in Figure \ref{fig bps} is the only possibility (in \cite{BPS26}'s notation, $k=1$ and $n=7$). Indeed, the $(-1)$-curve must intersect a $(-2)$-curve transversally at a single point, and there is only one possible position. Thus, we again start with $\F_8$ and blow up over one fiber to obtain $X$. By Artin's contractibility criterion, one obtains a singular surface $W_{BPS}$ with a single rational singularity, $-K_{W_{BPS}}$ ample, $\rho(W_{BPS})=1$, and $K_{W_{BPS}}^2=9$. The surface $W_{BPS}$ is not smoothable to $\P_{\C}^2$.

\begin{figure}[ht]
\centering
\includegraphics[scale=0.9]{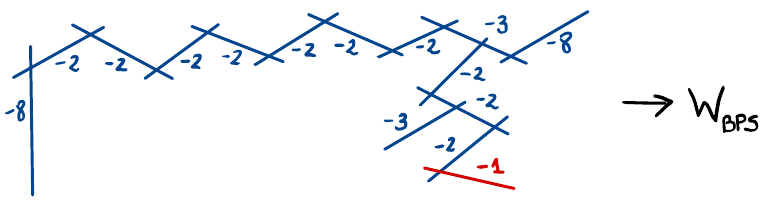}
\caption{The non-smoothable BPS example.}
\label{fig bps}
\end{figure}

\begin{theorem}
The symplectic rational blow-down of $X$ along the BPS configuration in $X$ (shown in Figure \ref{fig bps}) is $\P_{\C}^2$.
\label{SRBDofBPS}
\end{theorem}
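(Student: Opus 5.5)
The plan is to identify the symplectic rational blow-down $X_{BPS}$ of $X$ along the BPS configuration $C$. First we show it is a closed symplectic $4$-manifold with the rational homology of $\P_{\C}^2$ and containing a symplectic sphere of positive square. Then we invoke McDuff's classification of rational and ruled symplectic $4$-manifolds.

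\textbf{Topology.} By \cite[Thm.~1.1]{BPS26}, the boundary of a neighborhood $N(C)$ of the configuration bounds a rational homology disk Stein filling $B$. We write $X_{BPS}=(X\setminus N(C))\cup B$. We have $b_2(X)=\rho(X)$, since $X$ is rational, and $\rho(W_{BPS})=1$ means $|C|=b_2(X)-1$. A Mayer--Vietoris computation, using $b_1(B)=b_2(B)=0$ and $H_1(\partial N(C);\Q)=0$, then gives $b_2(X_{BPS})=1$ and $b_1(X_{BPS})=0$. Hence $\chi_{top}(X_{BPS})=3$ and $\sigma(X_{BPS})=1$, so $b^+=1$ and $b^-=0$. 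Since the filling is a $\Q$HD, the analogue of the Euler characteristic and Noether computation of Lemma \ref{mu=0} gives $K_{X_{BPS}}^2=2\chi_{top}+3\sigma=9$.

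\textbf{Rationality via a positive sphere.} Next we locate a symplectic sphere of nonnegative square in $X\setminus N(C)$, which survives into $X_{BPS}$. The natural candidate comes from the ruling $\pi'\colon X\to\P^1_{\C}$. A general fiber $F$ is disjoint from the unique special fiber, but it meets $\sigma_\infty$, which lies in $C$. So $F$ itself is not in the complement. Instead we take the image $\Gamma$ of a section of $\F_8$ disjoint from $\sigma_\infty$, say $\sigma_0\sim\sigma_\infty+8F$ with $\sigma_0^2=8$. We choose it to avoid all blown-up points, which lie over a single fiber; this is possible because sections in $|\sigma_\infty+8F|$ form a large linear system. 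It is then also disjoint from $C$, except possibly where it meets the special fiber. Here we must check against Figure \ref{fig bps} that the component of the special fiber hit by $\sigma_0$ can be chosen to be the $(-1)$-curve, or a curve not in $C$. If this fails, we instead smooth $\sigma_0$ together with the $(-1)$-curve $\Gamma^-$ into a sphere avoiding $C$. The outcome is a symplectic sphere $\Sigma\subset X_{BPS}$ with $\Sigma^2>0$.

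\textbf{Conclusion.} By McDuff's theorem, a closed symplectic $4$-manifold containing an embedded symplectic sphere of nonnegative self-intersection is rational or ruled. It is ruled over a curve of genus $g\ge1$ only if $b_1\neq0$, which is excluded by $b_1(X_{BPS})=0$. Hence $X_{BPS}$ is a symplectic blow-up of $\P^2$ or of an $S^2$-bundle over $S^2$. Since $b_2=1$, it is $\P_{\C}^2$; one can also see this from $K^2=9$.

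\textbf{Main obstacle.} The main obstacle is the sphere step: we need a curve in $X$ of positive square that is genuinely disjoint from every component of $C$. This depends on the precise position of the blown-up points in Figure \ref{fig bps}. The first thing to try is a section of $\F_8$ in $|\sigma_\infty+8F|$ passing through the point where the final $(-1)$-curve was created, with appropriate tangency conditions, so that its proper transform meets only that $(-1)$-curve in the special fiber. A secondary issue is justifying that the symplectic rational blow-down is well defined here, which depends on the symplectic structure on the filling from \cite{BPS26}. For this we cite \cite[Thm.~1.1]{BPS26} directly, together with the standard fact that a concave neighborhood of a negative-definite plumbing can be cut out and replaced by a strong filling of its boundary.
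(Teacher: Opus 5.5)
Your Betti number computation and your final step are fine: rational or ruled with $b_1=0$ and $b_2=1$ forces $\P^2_{\C}$. The gap is the sphere step. It is not a matter of checking Figure~\ref{fig bps}, because no symplectic sphere of positive square exists in $X\setminus N(C)$ at all.

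\textbf{Your candidate fails for every configuration.} The proper transform of $\sigma_0\in|\sigma_\infty+8F|$ is a section of $\pi'$. So it meets the special fiber transversally at one point of a multiplicity-one component. But $\Gamma^-$ is the only $(-1)$-curve of a reducible fiber, so it has multiplicity at least $2$: a multiplicity-one $(-1)$-curve in a reducible fiber always comes with a second $(-1)$-curve. Every other fiber component lies in $C$, so the section meets $C$. Adding multiples of $\Gamma^-$ cannot cancel that positive intersection.

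\textbf{No sphere in the complement exists.} A closed surface in $X\setminus N(C)$ has class in $C^\perp\subset H_2(X;\Z)$, which has rank one. The set $C\cup\{\Gamma^-\}$, i.e.\ $\sigma_\infty$ plus all fiber components, is a $\Z$-basis of the unimodular lattice $H_2(X;\Z)$. A Schur complement computation then gives $(\phi_*\Gamma^-)^2=1/n^2$, where $n^2=|\det C|$. So $C^\perp$ is generated by $A_0=n^2\,\phi^*\phi_*\Gamma^-$, with $A_0^2=n^2$. Since $\rho(W)=1$ and $K_W^2=9$, you get $-K_W\equiv 3n\,\phi_*\Gamma^-$. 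Hence $K_X\cdot A_0=-3n$ and $1+a_D=-K_W\cdot\phi_*\Gamma^-=3/n$, where $D$ is the $(-2)$-curve met by $\Gamma^-$. Its discrepancy satisfies $a_D<0$, because $C$ contains a $(-8)$-curve and so all discrepancies are negative. Therefore $n\geq 4$. Adjunction for a symplectic sphere in class $jA_0$ reads $(jn-1)(jn-2)=0$, which is then impossible. This is consistent with the theorem: in $\P^2_{\C}$ the complement only carries classes of degree divisible by $n$, and there are no symplectic spheres of degree $\geq 3$.

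\textbf{The missing idea.} Drop the search for a curve and use the Liu / Ohta--Ono criterion \cite[Cor.~1.4(ii)]{McS96}: a closed symplectic $4$-manifold with $K\cdot[\omega]<0$ is rational or ruled. This is what the paper does. Following \cite[Thm.~2.3]{RU25}, it traces the symplectic form through the rational blow-down. Under the finite-index inclusion $H_2(Y)\subset H_2(W)$, this gives $[\omega]=-MK_W$ with $M>0$, and the symplectic canonical class of $Y$ is represented by $K_W$. Hence $K_Y\cdot[\omega]=-MK_W^2<0$, and $b_2(Y)=1$ gives $Y=\P^2_{\C}$.

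In your own setup, this amounts to checking $K_X\cdot A_0<0$ and $[\omega](A_0)>0$ on the generator $A_0$ of $H_2(Y;\Q)$. The second inequality holds because $A_0$ is nef and big and lies in the complement of $N(C)$. Both facts come from data you already computed.
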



\begin{proof}

Just as in the proof of \cite[Thm.~2.3]{RU25}, we can trace the symplectic form $\omega$ on the symplectic rational blow-down $Y$ of the configuration in $X$ (the minimal resolution of $W:=W_{BPS}$). It follows that $[\omega] = -M K_W$ for some $M>0$. We recall that $-K_W$ is ample. On the other hand, since the rational blow-down replaces the configuration by a rational homology disk, we have $H_2(Y) \subset H_2(W)$ with finite index. Moreover, the symplectic canonical class $K_Y$ is represented by $K_W$, and hence $K_Y \cdot [\omega] < 0$. By a result of Liu and Ohta--Ono (see \cite[Cor.~1.4(ii)]{McS96}), it follows that $Y$ is rational. Since $b_2(Y)=1$, we conclude that $Y=\P_{\C}^2$.
\end{proof}

 

\bibliographystyle{abbrv}
\bibliography{Bibliography}
\end{document}